\definecolor{dblue}{rgb}{0,0,.6}
\newtheorem{theorem}{Theorem}[section]
\theoremstyle{plain}
\newtheorem{corollary}[theorem]{Corollary}
\newtheorem{lemma}[theorem]{Lemma}
\newtheorem{proposition}[theorem]{Proposition}
\newtheorem{remark}[theorem]{Remark}
\numberwithin{equation}{section}
\newcommand{\del}{\partial}
\newcommand{\Z}{\mathbb Z}
\newcommand{\Q}{\mathbb Q}
\newcommand{\C}{\mathbb C}
\newcommand{\CP}{\mathbb P}
\newcommand{\im}{\operatorname{im}}
\newcommand{\Pic}{\operatorname{Pic}}
\newcommand{\Spec}{\operatorname{Spec}}
\newcommand{\Gal}{\operatorname{Gal}}
\newcommand{\codim}{\operatorname{codim}}
\newcommand{\Br}{\operatorname{Br}}
\newcommand{\CH}{\operatorname{CH}}
\newcommand{\cl}{\operatorname{cl}} 
\newcommand{\Frac}{\operatorname{Frac}}
  \newcommand{\coker}{\operatorname{coker}}
  \newcommand{\Griff}{\operatorname{Griff}}  
\newcommand{\tors}{\operatorname{tors}}
\newcommand{\alg}{\operatorname{alg}}
\newcommand{\dashedlongrightarrow}{\xymatrix@1@=15pt{\ar@{-->}[r]&}}
\renewcommand{\longrightarrow}{\xymatrix@1@=15pt{\ar[r]&}}
\renewcommand{\mapsto}{\xymatrix@1@=15pt{\ar@{|->}[r]&}}
\renewcommand{\twoheadrightarrow}{\xymatrix@1@=15pt{\ar@{->>}[r]&}}
\newcommand{\hooklongrightarrow}{\xymatrix@1@=15pt{\ar@{^(->}[r]&}}
\newcommand{\congpf}{\xymatrix@1@=15pt{\ar[r]^-\sim&}}
\renewcommand{\cong}{\simeq}
\begin{document}    

\title[Infinite torsion in Griffiths groups]{Infinite torsion in Griffiths groups}

\author{Stefan Schreieder} 
\address{Institute of Algebraic Geometry, Leibniz University Hannover, Welfengarten 1, 30167 Hannover , Germany.}
\email{schreieder@math.uni-hannover.de}

\date{\today}
\date{January 28, 2022;}
\subjclass[2010]{primary 14C25;  secondary 14J28} 
%

\keywords{Griffiths group, algebraic cycles, unramified cohomology, Enriques surfaces.}

\begin{abstract}   
We show that there are smooth complex projective varieties with infinite $2$-torsion in their third Griffiths groups. 
It follows that the torsion subgroup of  Griffiths groups is in general not finitely generated, thereby solving a problem of Schoen from 1992.
\end{abstract}

\maketitle 
 
\section{Introduction}

The Griffiths group $\Griff^i(X)$ of a smooth complex projective variety $X$   is the group of homologically trivial codimension $i$  cycles modulo algebraic equivalence.  
This is a countable abelian group which is a basic invariant of $X$.  
However, detecting whether a given homologically trivial cycle is nontrivial in the Griffiths group is a subtle problem.  
For instance, the isomorphism type of the abelian group $\Griff^i(X)$ is not known in any nontrivial example.

Griffiths \cite{griffiths} used his transcendental Abel--Jacobi maps to construct the first example of a smooth complex projective variety with nontrivial Griffiths group.
Clemens \cite{clemens} combined Griffiths' approach with a degeneration argument to show that in fact  $\Griff^i(X)\otimes \Q$ may be an infinite dimensional $\Q$-vector space for any $i\geq 2$.
This showed that Griffiths groups are in general not finitely generated modulo torsion.

Improving earlier results of Schoen \cite{schoen-modn} and Rosenschon--Srinivas  \cite{RS}, Totaro \cite{totaro-chow} showed that $\Griff^i(X)/\ell$ may for $i\geq 2$ be infinite for any prime $\ell$.  
These results rely on Griffiths' method, a theorem of Bloch--Esnault \cite{BE}, and ideas from Nori's proof \cite{nori-2} of Clemens' theorem.  

Schoen \cite{schoen-torsion} used Griffiths' method to show that Griffiths groups may contain nontrivial torsion. 
The first nontrivial torsion classes with trivial transcendental  Abel--Jacobi invariants have been constructed by Totaro \cite{totaro-IHC} via a topological method; non-torsion classes with that property had earlier been constructed by Nori \cite{nori}.  

\subsection{Main result}

This article shows that the theory of refined unramified cohomology developed in \cite{Sch-torsion1} furnishes a new method to detect nontriviality of classes in the Griffiths group.
As a concrete application, we prove the following.

\begin{theorem}\label{thm:JC}
Let $JC$ be the Jacobian of a very general quartic curve $C\subset \CP^2_\C$.  
Then for any very general Enriques surface $X$ over $\C$, $\Griff ^3(X\times JC)$ has infinite $2$-torsion. 
\end{theorem}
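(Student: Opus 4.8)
The plan is to construct, for the given very general Enriques surface $X$ and very general plane quartic $C$, an injective homomorphism $\psi\colon\Griff^2(JC)/2\to\Griff^3(X\times JC)[2]$, and then to invoke the (by now essentially standard) fact that the source is infinite. Write $Y=X\times JC$. On an Enriques surface one has $K_X\in\Pic(X)=\CH^1(X)$ with $2K_X=0$ but $K_X\neq0$, and the reduction $\bar\kappa\in H^2(X,\Z/2)$ of its cycle class is nonzero, because the cycle class $[K_X]$ generates the torsion subgroup of $H^2(X,\Z)$ and hence is not $2$-divisible there. For any homologically trivial $Z\in\CH^2(JC)$, the external product $K_X\times Z\in\CH^3(Y)$ satisfies $2(K_X\times Z)=(2K_X)\times Z=0$ and $[K_X\times Z]=\pr_X^\ast[K_X]\smile\pr_{JC}^\ast[Z]=0$, so it is a $2$-torsion, homologically trivial cycle; and if $Z$ is algebraically trivial on $JC$, then $K_X\times Z$ is algebraically trivial on $Y$. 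Hence the assignment $Z\longmapsto K_X\times Z$ descends to the homomorphism $\psi$ above.

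First I would record that the source of $\psi$ is infinite for very general $C$. Smooth plane quartics are exactly the non-hyperelliptic curves of genus $3$, and the Torelli map sends a very general such curve to a very general point of the moduli space $\mathcal{A}_3$ of principally polarized abelian threefolds; so, by the non-divisibility results of \cite{BE} together with Nori's degeneration method \cite{nori-2} run over the family of plane quartics as in \cite{totaro-chow}, the group $\Griff^2(JC)/2$ is infinite. Granting the injectivity of $\psi$, it follows that $\Griff^3(Y)[2]\supseteq\psi\bigl(\Griff^2(JC)/2\bigr)$ is infinite, which is the assertion.

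It remains to prove that $\psi$ is injective, and this is where \cite{Sch-torsion1} enters. The idea is to replace $\Griff^2(JC)$ and $\Griff^3(Y)$ by their incarnations as (subquotients of) refined unramified cohomology groups of $JC$ and of $Y$, using that these incarnations carry cup products, external products and proper pushforwards compatible with the corresponding operations on cycles and with reduction modulo $2$. Under this dictionary $\psi$ becomes the operation $z\longmapsto\pr_X^\ast\bar\kappa\smile\pr_{JC}^\ast z$. Now mod-$2$ Poincar\'e duality on the surface $X$ makes the pairing $H^2(X,\Z/2)\times H^2(X,\Z/2)\to H^4(X,\Z/2)=\Z/2$ perfect, and $\bar\kappa\neq0$, so one may choose $\gamma\in H^2(X,\Z/2)$ with $\gamma\smile\bar\kappa$ the generator of $H^4(X,\Z/2)$. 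Then the operation $b\longmapsto\pr_{JC,\ast}\bigl(\pr_X^\ast\gamma\smile b\bigr)$ is a left inverse to $\psi$ by the projection formula, since $\pr_{JC,\ast}\bigl(\pr_X^\ast\gamma\smile\psi(z)\bigr)=\pr_{JC,\ast}\bigl(\pr_X^\ast(\gamma\smile\bar\kappa)\bigr)\smile z=z$. Hence $\psi$ is injective.

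The hard part is this last step: transporting the essentially trivial cohomological retraction on $X$ to a genuine retraction on the Griffiths group of the fivefold $Y$. Concretely one must (i) pin down the precise refined unramified cohomology groups of \cite{Sch-torsion1} that compute $\Griff^2(JC)$ and $\Griff^3(X\times JC)$, and verify that the external product with $K_X$ acts there as the cup-product operation above, compatibly with the subquotients involved and with reduction modulo $2$; (ii) establish the projection formula and the Künneth behavior of refined unramified cohomology for the product $X\times JC$ that make the computation above legitimate; and (iii) realize the pushforward $b\longmapsto\pr_{JC,\ast}(\pr_X^\ast\gamma\smile b)$ on refined unramified cohomology, which on the cohomological side rests on Poincar\'e duality but has no counterpart within Chow groups. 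These three points are exactly what the formalism of \cite{Sch-torsion1} is designed for, and they constitute the real content of the proof; by contrast, the infiniteness of $\Griff^2(JC)/2$, while it requires running the method of \cite{totaro-chow} over the family of plane quartics, rests only on established techniques.
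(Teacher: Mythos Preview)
Your overall architecture matches the paper: define $\psi([z])=[D\times z]$ with $D=K_X$, quote Totaro for the infiniteness of $\Griff^2(JC)/2$, and prove injectivity of $\psi$ by passing to refined unramified cohomology and using Poincar\'e duality on the surface $X$ to build a retraction. The paper does exactly this in outline (see Steps~1--2 of the proof of Theorem~\ref{thm:product-2}). Where your proposal goes wrong is the sentence ``These three points are exactly what the formalism of \cite{Sch-torsion1} is designed for'': point~(iii) is \emph{not} formal, and this is the whole content of the paper.

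Concretely: the vanishing $\Lambda([\tilde\delta(\alpha)]\otimes[\beta])=0$ means $p^\ast\alpha\cup q^\ast\beta\in\widetilde G^{i+1}$, i.e.\ its Bockstein extends to $X\times Y$ \emph{after} correcting by a class $\iota_\ast\xi$ supported on an arbitrary codimension-$i$ subscheme of $X\times Y$ (Lemma~\ref{lem:les}). When you apply your retraction $b\mapsto q_\ast(p^\ast\gamma\cup b)$ to this equation, the projection formula handles the terms coming from $p^\ast\alpha\cup q^\ast\beta$ exactly as you say, and the term coming from the global extension is visibly global on $Y$; but you are left with an uncontrolled error term
\[
q_\ast\!\left(p^\ast\gamma\cup \iota_\ast\xi\right)\in H^{2i-1}(F_{i-2}Y,\mu_2^{\otimes i}),
\]
and there is no formal reason this should lie in the image of $H^{2i-1}(Y,\mu_2^{\otimes i})$. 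The support of $\xi$ need not be a product, so its projections to $X$ and to $Y$ can be anything, and the cup product with $p^\ast\gamma$ does not kill it. Your retraction therefore does not descend to the quotients that compute $\Griff$; this is precisely why the Chow-theoretic analogue of the argument fails.

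The paper resolves this by a genuinely new geometric input that your proposal does not anticipate: a strictly semi-stable degeneration $\mathcal X\to\Spec R$ of the Enriques surface to a union of \emph{ruled} surfaces such that the nonzero Brauer class of $X$ extends to an unramified class on $\mathcal X$ (Theorem~\ref{thm:Enriques-degeneration}, whose construction occupies all of Sections~\ref{sec:degeneration-pg=0}--9). Because the components of the special fibre are ruled, the extended class---and hence your Poincar\'e-dual $\gamma$, chosen to be its image---restricts to zero on each of them. Proposition~\ref{prop:vanishing} then shows that the obstruction term $q_\ast(p^\ast\gamma\cup\iota_\ast\xi)$ has trivial specialisation to the central fibre; since $\beta$ comes from $Y_\kappa$ and hence specialises to itself, one concludes that $\beta$ extends to $Y$. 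This degeneration is why the hypothesis ``$X$ very general'' appears and why the injectivity is nontrivial; it is not a consequence of the formalism of \cite{Sch-torsion1} alone.
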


As an immediate corollary, we obtain:

\begin{corollary} \label{cor:fg}
The torsion subgroup of  Griffiths groups of smooth complex projective varieties is in general not finitely generated. 
\end{corollary}

The above results solve a problem of Schoen \cite{schoen-torsion}, who writes in the introduction: 

``\textit{Although our experience is that torsion in the Griffiths groups of varieties over $\C$ is difficult to find, we have no compelling evidence that it is always finite or usually zero.}''

Schoen's problem and Theorem \ref{thm:JC} above should be compared to a theorem of Merkurjev--Suslin \cite{MS}, who showed that for any integer $n$, the $n$-torsion subgroup of $\Griff^2$ is finite.

The analogue of Corollary \ref{cor:fg} is trivial for Chow groups, as any elliptic curve $E$ has infinite torsion in $\CH_0(E)$.
However, even for Chow groups, the problem becomes interesting if we restrict to $\ell$-torsion for a given prime $\ell$.
It has been shown in \cite{schoen-product,RS,totaro-chow} that these groups may be infinite, but  
 all involved torsion cycles are algebraically equivalent to zero.

The infinitely many different $2$-torsion classes in $\Griff^3(X\times JC)$ from Theorem \ref{thm:JC} are given by exterior products $K_X\times z$, where $K_X\in \Pic X$ is the unique $2$-torsion class  on the Enriques surface $X$ and where $z$ is the Ceresa cycle $C-C^{-}$ on $JC$ (see \cite{ceresa}), or a pullback of the Ceresa cycle by one of infinitely many isogenies.
We will show that infinitely many of the cycles $K_X\times z$ are linearly independent modulo $2$.
To this end we will use Totaro's result \cite{totaro-chow}, who showed that infinitely many of the classes $z$ are linearly independent modulo $2$ in $\Griff^2(JC)$.
Totaro used among other ingredients that the Ceresa cycle has nontrivial Abel--Jacobi invariant, cf.\ \cite{hain}.
In contrast, $K_X\times z$ has trivial Abel--Jacobi invariant.

While it is natural to consider cycles of the form $K_X\times z$ as above, our method of proving that they are nontrivial (resp.\ linearly independent)  
is new.
The main idea is that  
  \cite{Sch-torsion1} allows one to use cohomological tools (most notably Poincar\'e duality)  that are a priori not available within the framework of algebraic cycles. 

Even though Theorem \ref{thm:JC} concentrates on $2$-torsion in the Griffiths group, our method is flexible and large parts of this paper work for $\ell$-torsion for arbitrary primes $\ell$.
In particular, it is conceivable that our method will allow to find infinite $\ell$-torsion for other primes $\ell$ by replacing Enriques surfaces by suitable surfaces $X$ with $\ell$-torsion in $H^2(X,\Z)$.

\subsection{An injectivity theorem}
Let $Y$ be an arbitrary smooth complex projective variety and let $A^i(Y):=\CH^i(Y)/\sim_{\alg}$ be the Chow group modulo algebraic equivalence.
Theorem \ref{thm:JC} will be deduced from the following, which is the main result of this paper.

\begin{theorem} \label{thm:main}
Let $Y$ be a smooth complex projective variety and let $X$ be an Enriques surface that is very general with respect to $Y$.
Then the exterior product map
$$
A^{i}(Y)/2 \longrightarrow A^{i+1}(X\times Y)[2] ,\ \ [z]\mapsto [K_X\times z]
$$
is injective.  In fact,  $[K_X\times z]$ is  not divisible by  $2$ in $A^{i+1}(X\times Y)$ unless  $[z]=0\in A^{i}(Y)/2$.
\end{theorem}

The condition on $X$ means that it lies outside a countable union of proper closed subsets (which may depend on $X$) of the moduli space of Enriques surfaces.

We let $\mathcal T^{i+1}(X\times Y)\subset \Griff^{i+1}(X\times Y)_{\tors}$ denote the kernel of the transcendental Abel--Jacobi map (see (\ref{def:lambda_tr}) below) and define
$$
E_2^i(Y):=\ker(\cl_Y^i:A^i(Y)/2\longrightarrow H^{2i}(Y,\Z/2)).
$$ 

\begin{corollary} \label{cor:main}
In the notation of Theorem \ref{thm:main},  there is a canonical injection
$$
E_2^{i}(Y) \hookrightarrow \mathcal T^{i+1}(X\times Y)[2] ,\ \ [z]\mapsto [K_X\times z] .
$$
\end{corollary}

By \cite{Sch-torsion1} (see Theorem \ref{thm:refined} below),  there is  a canonical extension 
\begin{align} \label{def:E}
0\longrightarrow \Griff^i(Y)/2 \longrightarrow E_{2}^i(Y)\longrightarrow 
 Z^{ i}(Y)[2]/H^{2i} (Y,\Z )[2] 
\longrightarrow 0 ,
\end{align} 
where $Z^{ i}(Y)[2]$ denotes the $2$-torsion subgroup of $\coker(\cl_Y^i:\CH^i(Y)\to H ^{{2i}}(Y,\Z ))$. 
Corollary \ref{cor:main} thus shows that there are two sources for nontrivial $2$-torsion classes with trivial transcendental Abel--Jacobi invariants  in the Griffiths group of $X\times Y$: one coming from $\Griff^i(Y)/2$ and one stemming from  $Z^{ i}(Y)[2]/H^{2i} (Y,\Z )[2] $.
While the former may be infinite \cite{totaro-chow,diaz-JAG}, the latter is always a finite group.
One source for nontrivial elements in $Z^{ i}(Y)[2]/H^{2i} (Y,\Z )[2] $ are non-algebraic non-torsion Hodge classes $\alpha$ on $Y$ such that $2\alpha$ is algebraic. 
Several examples with that property are known (see e.g.\ \cite{OS,diaz}) and we will discuss those applications in Corollaries \ref{cor:CH-0=0-2},  \ref{cor:CH-0=0}, and \ref{cor:Kummer} below.
For instance, Corollary \ref{cor:main} and \cite{OS} lead to the first known example of a smooth complex projective variety $X$ with a rational decomposition of the diagonal for which $\mathcal T^3(X)\neq 0$,  see Corollaries \ref{cor:CH-0=0-2} and \ref{cor:CH-0=0} below.

In \cite{SV}, Soul\'e and Voisin constructed non-divisible torsion classes in the Griffiths group of products $X\times Y$, where $H^2(X,\Z)_{\tors}\neq 0$ and $Y$ is a carefully chosen hypersurface in $\CP^4$ so that the integral Hodge conjecture fails for $Y$ by Koll\'ar's argument \cite{BCC}. 
Their argument relies  on degenerations of $Y$. 
Instead, we will degenerate $X$,  while $Y$ may be arbitrary and the contribution of $\Griff^i(Y)/2$ will taken into account in a precise way.  

\subsection{Degenerations of Enriques surfaces}
In addition to the theory of refined unramified cohomology, Theorem \ref{thm:main} relies on the following geometric input.

\begin{theorem}\label{thm:Enriques-deg-intro}
There is a regular flat projective scheme $\mathcal X\to \Spec R$ over a discrete valuation ring $R$ whose residue field is an algebraically closed field $\kappa$ of characteristic zero, such that:
\begin{enumerate}
\item the geometric generic fibre $X_{\overline \eta}$ is an Enriques surface;\label{item:Enriques:1}
\item the special fibre $X_0=\mathcal X\times \kappa$ is a union of ruled surfaces;\label{item:Enriques:ruled}
\item the restriction map 
$\Br(\mathcal X)[2]\longrightarrow \Br(X_{\overline \eta})[2]$ 
 is surjective.
\label{item:Enriques:surjective}
\end{enumerate}
\end{theorem}

The geometric meaning of the theorem is as follows.
By \cite{dJ}, the unique nonzero class in $ \Br(X_{\overline \eta})\cong \Z/2$ corresponds to a smooth (i.e.\ unramified) conic bundle $P\to X_{\overline \eta}$ and the above theorem implies that this conic bundle extends to a smooth  conic bundle $\mathcal P\to \mathcal X$.
That is, while the Enriques surface breaks up into ruled components, the conics in the fibration $P\to X_{\overline \eta}$ remain smooth and do not break up into the union of two lines. 

The fact that degenerations as above exist was a surprise to the author.  
To explain one subtle aspect,  note that we may   assume that the dvr $R$ in Theorem \ref{thm:Enriques-deg-intro}  is complete.
Item (\ref{item:Enriques:1}) together with the proper base change theorem can then be used to show $\Br(\mathcal X)_{\tors} \cong \Br(X_0)_{\tors}$ (see Proposition \ref{prop:Br-neq-0} and Remark \ref{rem:prop:Brauer} below) and so $ \Br(X_0)[2]\neq 0$  by item (\ref{item:Enriques:surjective}).
On the other hand, each component $X_{0i}$ of $X_0$ is ruled by item (\ref{item:Enriques:ruled}) and so $\Br(X_{0i})=0$ for all $i$.

For us, the crucial consequence of the above theorem will be as follows: 
 
 \begin{corollary}\label{cor:Enriques-deg-intro}
In the notation of Theorem \ref{thm:Enriques-deg-intro}, there is a class $\alpha\in \Br(\mathcal X)[2]$ such that $\alpha|_{X_{\overline \eta}}$ generates $ \Br(X_{\overline \eta})\cong \Z/2$ and for any component $X_{0i}$ of the special fibre: 
$
\alpha|_{X_{0i}}=0\in \Br(X_{0i}) .
$
\end{corollary}

\subsection{Outline of the argument}
Let $X$ and $Y$ be as in Theorem \ref{thm:main}.
The cohomological analogue of the exterior product map in Theorem \ref{thm:main} is the cup product map 
$$
H^2(X,\Z/2)\otimes H^{2i}(Y,\Z/2)\longrightarrow H^{2i+2}(X\times Y,\Z/2),\ \ \ \alpha\otimes \beta \mapsto p^\ast \alpha\cup q^\ast \beta,
$$
where $p:X\times Y\to X$ and $q:X\times Y\to Y$ denote the natural projections.
If $\alpha\in H^2(X,\Z/2)$ is nonzero,   Poincar\'e duality yields a class $\bar \alpha\in H^{2}(X,\Z/2)$ with $\alpha\cup \bar \alpha=\cl_X^{2}(pt)$ and we find
$$
q_\ast (p^\ast \bar\alpha\cup p^\ast \alpha\cup q^\ast \beta)=q_\ast( p^\ast \cl_X^{2}(pt)\cup q^\ast \beta)=\beta \in H^{2i}(Y,\Z/2).
$$
This classical argument shows that $p^\ast \alpha\cup q^\ast \beta\neq 0$ as long as $\alpha$ and $\beta$ are nonzero.

We aim to use this approach to prove Theorem \ref{thm:main}.
The obvious obstacle in doing so is that $A^1(X)/2$ does not satisfy Poincar\'e duality (in fact, $K_X\cdot D=0$ for any divisor $D$ on $X$).
The crucial input which allows us to circumvent this problem is  \cite{Sch-torsion1}, which shows that Chow groups modulo algebraic equivalence can  be computed by refined unramified cohomology.
An important observation here is that the Poincar\'e dual $\bar \alpha \in H^2(X,\Z/2)$ of the class $\alpha:=[K_X]\in H^2(X,\Z/2)$ generates the second unramified cohomology of $X$:
$$
\Br(X) \cong H^2_{nr}(X,\Z/2) = \Z/2 [\bar \alpha] .
$$
Hence, algebraic cycles modulo algebraic equivalence as well as the Poincar\'e dual of $K_X$ live in the world of refined unramified cohomology and so it is natural to try to work there.

Following this approach, it is straightforward to prove injectivity of the exterior product map in Theorem \ref{thm:main} on the level of cycles (not modulo algebraic equivalence) purely in terms of cohomology. 
While that statement is of course trivial,  passing to algebraic equivalence will introduce an error term of the form
\begin{align}\label{eq:error}
q_\ast(p^\ast \bar \alpha\cup \iota_\ast \xi )\in H^{2i-1}(V,\Z/2),\ \ \ \text{where}\ \ \ \xi\in   \bigoplus_{w\in (X\times Y)^{(i)}} H^1( w ,\Z/2). 
\end{align}
Here $V\subset Y$ is some open subset whose complement $R=Y\setminus V$ has codimension at least $i-1$.
The main technical difficulty in the proof of Theorem \ref{thm:main} is to show that (\ref{eq:error}) vanishes (possibly up to a class that extends to all of $Y$).

At this point we use specialization maps for refined unramified cohomology that we construct in Section \ref{sec:sp} below and which serve as a replacement for Fulton's specialization maps on Chow groups \cite{fulton}.
These maps are inspired by well-known specialization maps in Galois cohomology; see also Remark \ref{rem:sp} below. 

Even though we would like to show that (\ref{eq:error}) vanishes for $X$ and $Y$ smooth, we will be able to reduce the problem to the situation where $X$ splits up into many components as in Theorem \ref{thm:Enriques-deg-intro}. 
 This may be surprising, as usually one cannot prove the vanishing of an invariant by showing that it vanishes after specialization, but we will be able to put ourselves in a situation where that argument actually works. 

At this final step, the geometry of Enriques surfaces via Theorem \ref{thm:Enriques-deg-intro} comes in, as it is exactly the kind of degeneration needed to ensure that $\bar \alpha$ vanishes on each component of the special fibre of the degeneration and we will show in Lemma \ref{lem:vanishing} that this implies that the specialization of (\ref{eq:error}) vanishes.
This concludes the proof of Theorem \ref{thm:main} up to the proof of Theorem \ref{thm:Enriques-deg-intro}.
The latter relies in turn on an analysis of flower pot degenerations of Enriques surfaces, constructed by Persson and Horikawa  \cite{persson,horikawa,morrison}, cf.\ Theorem \ref{thm:Enriques-flower-pot} below.

\section{Preliminaries}

\subsection{Conventions}
For an abelian group $G$, we denote by $G[\ell ^r]$ the subgroup of $\ell^r$-torsion elements.
 Whenever $G$ and $H$ are abelian groups so that there is a canonical map $H\to G$ (and there is no reason to confuse this map with a different map), we write $G/H$ as a short hand for $\coker(H\to G)$.

All schemes are separated.
An algebraic scheme $X$ is a scheme of finite type over a field.
Its Chow group of codimension $i$ cycles modulo rational equivalence is denoted by $\CH^i(X)$; the quotient of $\CH^i(X)$ modulo algebraic equivalence is denoted by $A^i(X)$.

A variety  is an integral scheme of finite type over a field. 
For an equi-dimensional algebraic scheme $X$, we denote by $X^{(i)}$ the set of all codimension $i$ points of $X$.
For a scheme $X$ over a field $k$, we write for any field extension $K$ of $k$ the scheme given by extension of scalars as $X_K:=X\times _kK$.
A very general point of a scheme over $\C$ is a closed point outside a countable union of proper closed subsets.

If $R$ is an integral local ring with residue field $\kappa$ and fraction field $K$, with algebraic closure $\overline K$, then for any flat $R$-scheme $\mathcal X\to \Spec R$, we write $X_0:=\mathcal X\times_R\kappa$ (resp.\ $X_{\overline 0}:=\mathcal X\times_R \overline \kappa$) for the special (resp.\ geometric special) fibre and $X_{\eta}:=\mathcal X\times_RK$ (resp.\ $X_{\overline \eta}:=\mathcal X\times_R \overline K$) for the generic (resp.\ geometric generic)  fibre.

An irreducible flat  $R$-scheme $\mathcal X\to \Spec R$ over a discrete valuation ring $R$ is called strictly semi-stable, if $\mathcal X$ is regular, the generic fibre $X_\eta$ is smooth and the special fibre $X_0$ is a geometrically reduced simple normal crossing divisor on $\mathcal X$, i.e.\ the components of $X_0$ are smooth and the scheme-theoretic intersection of $r$ different components of $X_0$ is either empty or smooth and equi-dimensional of codimension $r$ in $\mathcal X$.

\subsection{Cohomology}
For a scheme $X$ and a prime $\ell$ invertible on $X$ we write
\begin{align} \label{def:Hi}
H^i(X,\mu_{\ell^r}^{\otimes n}):=H^i(X_{\text{\'et}},\mu_{\ell^r}^{\otimes n}) \ \ \text{and} \ \ H^i(X,\Z_{\ell}(n)):=H^i_{cont}(X_{\text{\'et}},\Z_{\ell}(n)) ,
\end{align}
where $H^i_{cont}$ denotes Jannsen's continuous \'etale cohomology, see \cite{jannsen}; sometimes we also write $H^i(X,\Z/\ell^r(n)) $ in place of $H^i(X,\mu_{\ell^r}^{\otimes n})$.
These groups are functorial with respect to pullbacks along arbitrary morphisms.

For a ring  $A$, we write $H^i(A,\mu_{\ell^r}^{\otimes n}):=H^i(\Spec A,\mu_{\ell^r}^{\otimes n})$.
If $A=K$ is a field, then these groups coincide with the Galois cohomology of the absolute Galois group of $K$.

For a scheme $X$ and a point $x\in X$, we write
$$
H^i(x,\mu_{\ell^r}^{\otimes n}):= \lim_{\substack{\longrightarrow \\ \emptyset \neq V_x\subset \overline {\{x\}}}} H^i(V_x,\mu_{\ell^r}^{\otimes n}),
$$
where $V_X$ runs through all open dense subsets of the closure of $x$.
By  \cite[p.  88, III.1.16]{milne}, this direct limit coincides with the \'etale cohomology of $\Spec \kappa(x)$ and hence with the Galois cohomology of the field $\kappa(x)$.

\begin{remark}\label{rem:Borel-Moore}
If $X$ is a smooth equi-dimensional algebraic scheme over a field $k$, then the groups in (\ref{def:Hi}) agree with the Borel--Moore cohomology groups used in \cite{Sch-torsion1},  see \cite[Lemma 6.5 and Proposition 6.6]{Sch-torsion1}.
\end{remark}

\begin{lemma}\label{lem:pushforward}
Let $f:X\to Y$ be a proper morphism between equi-dimensional smooth algebraic schemes over a field $k$.
Let $\ell$ be a prime invertible in $k$ and let $c:=\dim Y-\dim X$.
Then for $A=\Z/\ell^r$ or $A=\Z_\ell$, there are pushforward maps
$$
f_\ast:H^{i-2c}(X,A(n-c))\longrightarrow H^{i}(Y,A(n))
$$
that are compatible with respect to open immersions.
\end{lemma}
\begin{proof}
This follows from Remark \ref{rem:Borel-Moore} and  \cite[(P1)  in Proposition 6.6]{Sch-torsion1}.
\end{proof}

\begin{lemma}\label{lem:Gysin}
Let $X$ be a smooth equi-dimensional algebraic scheme over a field $k$ and let $Z\subset X$ be a smooth closed subscheme of pure codimension $c$ and with complement $U$.
Then for $A=\Z/\ell^r$ or $A=\Z_\ell$,  there is a long exact Gysin sequence
\begin{align}\label{eq:Gysin}
H^i(X,A(n))\longrightarrow H^i(U,A(n))\stackrel{\del}\longrightarrow H^{i+1-2c}(Z,A(n-c))\stackrel{\iota_\ast}\longrightarrow H^{i+1}(X,A(n)) ,
\end{align}
where $\iota_\ast$ is the pushforward map from Lemma \ref{lem:pushforward}.
This sequence is functorial with respect to pullbacks along open immersions as well as pushforwards along proper morphisms  $f:X'\to X$ such that $X'$ and $Z'=f^{-1}(Z)$ are smooth and equi-dimensional.
\end{lemma}
\begin{proof}
This follows from Remark \ref{rem:Borel-Moore} and  \cite[(P2)  in Proposition 6.6]{Sch-torsion1}.
\end{proof}

The Gysin sequence is compatible with cup products in the following sense. 
If $\alpha\in H^j(X,A(m))$, then cup product with $\alpha$ induces a commutative diagram
\begin{align}\label{eq:Gysin-cup}
\xymatrix{
H^i(X,A(n))\ar[r]\ar[d]^{\cup \alpha} & H^i(U,A(n))\ar[d]^{\cup \alpha|_U}\ar[r]^-{\del} & H^{i+1-2c}(Z,A(n-c))\ar[d]^{\cup \alpha|_{Z}}\\
H^{i+j}(X,A(n+m))\ar[r]  & H^{i+j}(U,A(n+m)) \ar[r]^-{\del} & H^{i+j+1-2c}(Z,A(n+m-c)) .
}
\end{align}
This fact is well-known; the case $A=\Z/\ell^r$ (which is enough for  the purpose of this paper) is for instance spelled out in \cite[Lemma 2.4]{Sch-survey}.

\begin{remark} \label{rem:mathcal-X} 
For $A=\Z/\ell^r$,  $H^i(X,A(n))$ commutes with filtered inverse limits of schemes with affine transition maps, see \cite[p.\ 88, III.1.16]{milne}.
Using this, it follows that Lemma \ref{lem:Gysin} and (\ref{eq:Gysin}) remain true in the case where $X $ is a regular scheme of finite type  
over a local ring $R=\mathcal O_{B,0}$ of a smooth $k$-variety $B$ at a closed point $0\in B$ and $Z\subset   X$ is a smooth equi-dimensional $k$-variety contained in the special fibre of $ X\to \Spec R$. 
\end{remark}

\subsection{Classical  unramified cohomology and Brauer groups}

Let $X$ be an integral regular scheme and let $x\in X^{(1)}$ be a codimension one point such that $\ell$ is invertible in the residue field $\kappa(x)$. 
Then there is a residue map in Galois cohomology
\begin{align}\label{eq:del_x}
\del_x:H^i(k(X),\mu_{\ell^r}^{\otimes n})\longrightarrow H^{i-1}(\kappa(x),\mu_{\ell^r}^{\otimes n-1}) ,
\end{align}
where $k(X)$ denotes the residue field of the generic point of $X$.
If $\ell$ is invertible in each residue field of $X$, then the unramified cohomology of $X$ may be defined as follows, see \cite[Theorem 4.1.1(a)]{CT}:
$$
H^i_{nr}(X,\mu_{\ell^r}^{\otimes n}):=\left\lbrace \alpha\in H^i(k(X),\mu_{\ell^r}^{\otimes n})\mid \del_x\alpha=0 \ \ \forall x\in X^{(1)} \right\rbrace .
$$ 

For a scheme $X$, we let $\Br(X):=H^2(X_{\text{\'et}},\mathbb G_m)$. 
For any prime $\ell$ that is invertible on $X$, the Kummer sequence yields an isomorphism
\begin{align}\label{eq:Brauer}
\coker(c_1:\Pic(X)\longrightarrow H^2(X_{\text{\'et}},\mu_{\ell^r}))\cong \Br(X)[\ell^r].
\end{align}
If $X$ is a regular variety, then $\Br(X)[\ell^r]\cong H^2_{nr}(X,\mu_{\ell^r})$, see \cite[Proposition 4.2.3]{CT}. 

\subsection{Refined unramified cohomology}
In this paper we use refined unramified cohomology only for smooth equi-dimensional algebraic schemes.
In this case the cohomology theory in (\ref{def:Hi}) agrees with Borel--Moore cohomology used in \cite{Sch-torsion1}, see Remark \ref{rem:Borel-Moore}.
For an equi-dimensional algebraic scheme $X$, we denote by $F_\ast X$ the increasing filtration given by
$$
 F_0X\subset F_1X\subset \dots \subset F_{\dim X}X=X,
\ \ \ \text{where}\ \ \ 
F_jX:=\{x\in X\mid \codim_X(x)\leq j\}.
$$ 
Fix a prime $\ell$ invertible on $X$ and let $A=\Z/\ell^r$ or $A=\Z_\ell$.
If $X$ is smooth (or more generally, if $F_jX$ is contained in a smooth open subset of $X$), we define
$$
 H^i(F_jX,A(n)):=\lim_{\substack{\longrightarrow \\  F_jX\subset U\subset X  }} H^i(U,A(n)) ,
$$
where $U$ runs through all open subsets of $X$ that contain $F_jX$, and where the cohomology functor is (continuous) \'etale cohomology from (\ref{def:Hi}).

\begin{lemma} \label{lem:funct-FjX}
Let $f:X\to Y$ be a morphism between smooth equi-dimensional algebraic schemes.
If $f$ is flat, then there is a pullback map
$$
f^\ast:H^i(F_jY,A(n)) \longrightarrow H^i(F_{j}X,A(n)).
$$
If $f$ is proper of relative codimension $c:=\dim (Y)-\dim(X)$, there is a pushforward map 
$$
f_\ast:H^{i}(F_{j}X,A(n )) \longrightarrow H^{i+2c}(F_{j+c}Y,A(n+c)) .
$$
\end{lemma}
\begin{proof} 
Let $V\subset Y$ be an open subset with $F_{j}Y\subset V$.
The complement $W=Y\setminus V$ has codimension at least $j+1$.
If $f$ is flat, then $f^{-1}(W)\subset X$ has codimension at least $j+1$ as well.
Hence there is a well-defined pullback map
$
H^i(V,A(n))\to H^i(F_{j}X,A(n))
$.
This map is compatible with respect to restrictions to smaller open subsets $V'\subset V$ with $F_jY\subset V'$ and hence induces the pullback $f^\ast$ stated in the lemma.

Assume now that  $f$ is proper and let $U\subset X$ be an open subset with $F_{j }X\subset U$ and complement $Z=X\setminus U$.
 Then $f(Z)\subset Y$ has codimension at least $j+c+1$.
 Moreover, 
Lemma \ref{lem:pushforward} induces a pushforward map
$$
f_\ast : H^{i }(X\setminus f^{-1}(f(Z)),A(n ))\longrightarrow H^{i+2c}(Y\setminus f(Z),A(n+c)).
$$
Pre- and postcomposing this with the canonical restriction maps, we get a pushforward map
$$
f_\ast:H^{i }(U,A(n))\longrightarrow  H^{i+2c}(F_{j+c}Y,A(n+c)).
$$
By the compatibility of pushforwards with restrictions along open immersions (see Lemma \ref{lem:pushforward}), this map is compatible with respect to restriction from $U$ to a smaller open subset $U'\subset X$ with $F_jX\subset U'\subset U$.
This implies the existence of   $f_\ast$ as claimed in the lemma.
\end{proof}

We define the $j$-th refined unramified cohomology of $X$ with values in $A(n)$ by
  $$
   H^i_{j,nr}( X,A(n)):=\im ( H^i(F_{j+1}X,A(n))\to  H^i(F_jX,A(n))) .
  $$
As indicated above, these  groups coincide with the refined unramified cohomology groups from \cite{Sch-torsion1} defined via Borel--Moore cohomology,  cf.\ Remark \ref{rem:Borel-Moore}.

Taking direct limits over (\ref{eq:Gysin}) twice, we get  the following, see  \cite[Lemma 5.8]{Sch-torsion1}. 

\begin{lemma} \label{lem:les}
Let $X$ be a smooth equi-dimensional algebraic scheme  over $k$.
For any $j,n\in \Z$, there is a long exact sequence
{\small
\begin{align*}
\dots \longrightarrow   H^i(F_jX,A(n)) \longrightarrow   H^i(F_{j-1} X,A(n)) \stackrel{\del}\longrightarrow \bigoplus_{x\in X^{(j)}}H^{i+1-2j}(x,A(n-j)) \stackrel{\iota_\ast} \longrightarrow     H^{i+1}(F_jX,A(n)) \longrightarrow \dots,
\end{align*}
}
where $\iota_\ast$ (resp.\ $\del$) is induced by the pushforward (resp.\ residue) map from (\ref{eq:Gysin}).
\end{lemma}

The above lemma implies $ H^i_{0,nr}(X,\mu_{\ell^r}^{\otimes n})= H^i_{nr}(X,\mu_{\ell^r}^{\otimes n}) $, and, by 
  \cite[Corollary  5.10]{Sch-torsion1},
\begin{align} \label{eq:lem:Fj}
H^i(X,A(n)) \cong H^i(F_jX,A(n)) \ \ \text{for all $j\geq \lceil i/2\rceil $.}
\end{align}

We define a decreasing filtration $F^\ast$ on $  H^i(F_jX,A(n))$ by
$$
F^m H^i(F_jX,A(n)):= \im \left( H^i(F_mX,A(n)) \longrightarrow  H^i(F_j X,A(n))  \right) 
$$
for $m\geq j$. 
In particular,
 $ 
F^{j+1} H^i(F_jX,A(n)) = H_{j,nr}^i(X,A(n)).
 $  
 We also define (see \cite[Definition 5.4]{Sch-torsion1}) a decreasing filtration $G^\ast$ on $ H^i(F_j X,\mu_{\ell ^r}^{\otimes n})$ by
$$
\alpha\in G^m H^i(F_jX,\mu_{\ell ^r}^{\otimes n})\ \ \ \Longleftrightarrow\ \ \ \delta(\alpha)\in F^m H^{i+1}(F_jX,\Z_\ell(n)) ,
$$
where $\delta$ denotes the Bockstein map associated to $0\to \Z_\ell(n)\to \Z_\ell(n)\to \mu_{\ell^r}^{\otimes n}\to 0$.
Moreover,
$$
G^{m} H^{i}_{j,nr} ( X,\mu_{\ell^r}^{\otimes n}):=\im( G^{m} H^{i} (F_{j+1}X,\mu_{\ell^r}^{\otimes n})\to  H^{i} (F_{j}X,\mu_{\ell^r}^{\otimes n})) .
$$

Related to $\delta$, there is  also the Bockstein map
$ 
\tilde \delta 
$
that is associated to the short exact sequence  $0\to \mu_{\ell^r}^{\otimes n}\to \mu_{\ell^{2r}}^{\otimes n}\to \mu_{\ell^r}^{\otimes n}\to 0$.
The Bockstein $\tilde \delta$ is compatible with $\delta$ in the sense that $\tilde \delta(\alpha)$ is the reduction modulo $\ell^r$ of $\delta(\alpha) $.
The key feature of $\tilde \delta$  is the derivation property (see e.g.\ \cite[p.\ 304]{hatcher}), which yields
\begin{align} \label{eq:derivation-property}
\tilde \delta(\alpha\cup \beta)=\tilde \delta(\alpha)\cup \beta+(-1)^{\deg(\alpha)}\alpha\cup \tilde \delta(\beta) .
\end{align}

In analogy to $G^\ast$, we have the filtration  $\widetilde G^\ast$ 
on $ H^i(F_jX,\mu_{\ell ^r}^{\otimes n})$, given by
$$
\alpha\in \widetilde G^m H^i(F_jX,\mu_{\ell ^r}^{\otimes n})\ \ \ \Longleftrightarrow\ \ \   \tilde \delta(\alpha)\in F^m H^{i+1}(F_jX,\mu_{\ell ^r}^{\otimes n}) ,
$$
see \cite[Definition 7.24]{Sch-torsion1}.   
Moreover,
$$
\widetilde G^{m} H^{i}_{j,nr} ( X,\mu_{\ell^r}^{\otimes n}):=\im( \widetilde G^{m} H^{i} (F_{j+1}X,\mu_{\ell^r}^{\otimes n})\to  H^{i} (F_{j}X,\mu_{\ell^r}^{\otimes n})) .
$$   
The compatibility of $\delta$ and $\tilde \delta$ implies directly:
\begin{align}\label{eq:G-subsetGtilde}
G^m  H^i(F_jX,\mu_{\ell ^r}^{\otimes n}) 
\subset \widetilde G^m H^i(F_j X,\mu_{\ell ^r}^{\otimes n}) .
\end{align}

\subsection{Cycle groups in terms of refined unramified cohomology}

In \cite[\S 7]{Sch-torsion1}, several cycle groups  are computed in terms of refined unramified cohomology.
The set-up in \cite{Sch-torsion1} works for arbitrary separated schemes of finite type over a field; the corresponding cohomology functor should be taken as Borel--Moore pro-\'etale cohomology, see \cite[Section 4]{Sch-torsion1}.
We will only use refined unramified cohomology for smooth varieties over a field,  in which case we may by \cite[Lemma 6.5 and Proposition 6.6]{Sch-torsion1} take the cohomology groups from (\ref{def:Hi}).
It follows that the results of \cite[Sections 5 and 7]{Sch-torsion1} hold true in our setting.
We will repeatedly make use of this fact in what follows and in particular freely use the results from  \cite[Section 7]{Sch-torsion1}.
We also note that over algebraically closed fields,   \cite[Proposition 6.6 and Lemma 7.5]{Sch-torsion1} imply that the group $A^i(X)_{\Z_\ell}$ from \cite[Definition 7.2]{Sch-torsion1} agrees with $A^i(X)\otimes _\Z \Z_\ell$, where $A^i(X)$ denotes the Chow group of algebraic cycles modulo algebraic equivalence.

For instance, if we define 
\begin{align} \label{def:E-ell-main-2}
E_{\ell^r}^i(X):= \ker (A^i(X)/\ell ^r\longrightarrow H^{2i}(X,\mu_{\ell^r}^{\otimes i})) ,
\end{align}
where the morphism is given by the reduction modulo $\ell^r$ of the cycle class map, then \cite[Corollary 7.12 and Lemma 7.13]{Sch-torsion1} imply the following.

\begin{theorem} 
\label{thm:refined}
Let $X$ be a smooth variety over an algebraically closed field $k$ and let $\ell$ be a prime that is invertible in $k$. 
Then there is a canonical isomorphism
$$
E_{\ell^r}^i(X)\cong
H^{2i-1}_{i-2,nr}( X,\mu_{\ell^r}^{\otimes i} ) / H^{2i-1}(X,\mu_{\ell^r}^{\otimes i})  
$$
and a canonical extension
$$
0\longrightarrow \Griff^i(X)/\ell^r\longrightarrow E_{\ell^r}^i(X)\longrightarrow Z^{i}(X)[\ell^r] /H^{2i}(X,\Z_\ell(i))[\ell^r] \longrightarrow 0 ,
$$
where $Z^{i}(X)[\ell^r]:=\coker \left( \cl^i_X:\CH^i(X)_{\Z_\ell}\to H^{2i}(X,\Z_\ell(i)) \right)[\ell  ^r] $. 
\end{theorem}

\subsection{Indivisible torsion classes with trivial transcendental Abel--Jacobi invariant}

Let $X$ be a smooth  variety over an algebraically closed field $k$ and let $\ell$ be a prime invertible in $k$. 
By \cite[Section 7.5]{Sch-torsion1}, there is  a transcendental Abel--Jacobi map on torsion cycles
\begin{align}\label{def:lambda_tr}
\lambda^i_{tr}:\Griff^i(X)[\ell ^\infty]\longrightarrow H^{2i-1}(X,\Q_\ell/\Z_\ell(i))/N^{i-1}H^{2i-1}(X,\Q_\ell (i)),
\end{align}
where $N^\ast$ denotes the coniveau filtration.
If $X$ is projective, then the above map agrees with Bloch's map and if in addition $k=\C$, then it agrees with Griffiths \cite{griffiths} transcendental Abel--Jacobi map on torsion cycles, see \cite[Proposition 8.5]{Sch-torsion1}. 
We write
$$
\mathcal T^i(X)[\ell^r]:=\ker(\lambda_{tr}^i)[\ell^r] \subset \Griff^i(X)[\ell ^r].
$$

\begin{theorem} 
\label{thm:refined:Ti[ell]}
Let $X$ be a smooth variety over an algebraically closed field $k$ and let $\ell$ be a prime invertible in $k$.
Then there are subgroups $\mathcal T_0^i(X)[\ell^r] \subset \mathcal T^i(X)[\ell^r]$ with $\mathcal T^i(X)[\ell^\infty]=\bigcup_r \mathcal T^i_0(X)[\ell^r]$ and canonical isomorphisms
$$
\mathcal T_0^i(X)[\ell^r] \cong \frac{ H^{2i-2}(F_{i-2}X,\mu_{\ell^r}^{\otimes i}) }{ G^{i} H^{2i-2}(F_{i-2} X,\mu_{\ell^r}^{\otimes i})}  \cong \frac{ H^{2i-2}_{i-3,nr}( X,\mu_{\ell^r}^{\otimes i}) }{ G^{i} H^{2i-2}_{i-3,nr}(X,\mu_{\ell^r}^{\otimes i})} .
$$  
Moreover, the kernel of the canonical surjection
$$
\mathcal T_0^i(X)[\ell^r] \cong \frac{ H^{2i-2}(F_{i-2}X,\mu_{\ell^r}^{\otimes i}) }{ G^{i} H^{2i-2}(F_{i-2}X,\mu_{\ell^r}^{\otimes i})}  \twoheadrightarrow \frac{ H^{2i-2}(F_{i-2}X,\mu_{\ell^r}^{\otimes i}) }{\widetilde G^{i} H^{2i-2}(F_{i-2}X,\mu_{\ell^r}^{\otimes i})}
$$
is given by all classes in $ \mathcal T_0^i(X)[\ell^r] $ that are $\ell^r$-divisible in $A^i(X)$.
\end{theorem}  
\begin{proof}
This follows from \cite[Corollary 7.23 and Proposition 7.25]{Sch-torsion1}.
\end{proof}

\section{Product maps} 

Here and in what follows, all tensor products will be over $\Z$ if not mentioned otherwise.

\begin{lemma} \label{lem:Lambda}
Let $k$ be an algebraically closed field and let $\ell$ be a prime invertible in $k$.
Let $X$ and $Y$ be smooth varieties over $k$.
Then there is a well-defined linear map
$$
\Lambda:\tilde \delta \left(H^1(X,\mu_{\ell^r}) \right)  \otimes \frac{  H^{2i-1}_{i-2,nr}(Y, \mu_{\ell^r}^{\otimes i})}{H^{2i-1}(Y,\mu_{\ell^r}^{\otimes i}) } 
\longrightarrow \frac{ H^{2i}(F_{i-1} (X\times Y),\mu_{\ell^r}^{\otimes i+1})}{\widetilde G^{i+1}  H^{2i} (F_{i-1}(X\times Y),\mu_{\ell^r}^{\otimes i+1})}
 $$
 which on elementary tensors is given by
 $$
\Lambda([\tilde \delta(\alpha)]\otimes [\beta]):=[p^\ast \alpha \cup q^\ast \beta] 
$$ 
where $p:X\times Y\to X$ and $q:X\times Y\to Y$ denote the natural projections and where $\alpha\in H^1(X,\mu_{\ell^r})$ and $\beta\in  H^{2i-1}(F_{i-1}Y, \mu_{\ell^r}^{\otimes i})$.
\end{lemma}
\begin{proof} 
Recall $G^m \subset \widetilde G^m$ from (\ref{eq:G-subsetGtilde}).
Well-definedness in $\beta$ follows therefore from (\ref{eq:G-subsetGtilde}) together with the isomorphism
$$
\frac{ H^{2i}(F_{i-1}(X\times Y),\mu_{\ell^r}^{\otimes i+1})}{  G^{i+1}  H^{2i} (F_{i-1}(X\times Y),\mu_{\ell^r}^{\otimes i+1})}\stackrel{\cong}\longrightarrow  \frac{ H^{2i}_{i-2,nr}( X\times Y ,\mu_{\ell^r}^{\otimes i+1})}{ G^{i+1} H^{2i}_{i-2,nr} ( X\times Y ,\mu_{\ell^r}^{\otimes i+1})}
$$
 from Theorem \ref{thm:refined:Ti[ell]}. 
To prove well-definedness in $\alpha$, assume that $\tilde \delta (\alpha)=0$.
Then by the derivation property of the Bockstein $\tilde \delta$ (see (\ref{eq:derivation-property})) together with its functoriality, we find
$$
\tilde \delta(p^\ast \alpha\cup q^\ast \beta)=-p^\ast \alpha\cup q^\ast \tilde \delta ( \beta) .
$$
By \cite[Corollary 7.9]{Sch-torsion1},
 $\tilde \delta(\beta)$ extends to  $Y$ and so
 $ 
 p^\ast \alpha\cup q^\ast \beta\in \widetilde G^{i+1} H^{2i} (F_{i-1}(X\times Y),\mu_{\ell^r}^{\otimes i+1}).
 $ 
This shows that $\Lambda$ is well-defined, which concludes the proof of the lemma.      
\end{proof}


\begin{lemma} \label{lem:deltaH^1}
Let $X$ be a smooth variety over an algebraically closed field $k$ and let $\ell$ be a prime invertible in $k$.
Then there is a canonical isomorphism
$$
\widetilde \delta(H^1(X,\mu_{\ell^r}))\cong A^1(X)[\ell^r]/\ell^rA^1(X)[\ell^{2r}].
$$
\end{lemma}
\begin{proof}
Since algebraic and homological equivalence coincides for divisors on smooth varieties,  
$$
A^1(X)[\ell^r]\cong H^2(X,\Z_\ell(1))[\ell^r]\cong \delta (H^1(X,\mu_{\ell^r})),
$$
where we use that any class in $H^2(X,\Z_\ell(1))[\ell^r]\cong \delta (H^1(X,\mu_{\ell^r}))$ is algebraic as it comes from a $\mu_{\ell^r}$-torsor and hence from a line bundle.
The lemma follows therefore from the compatibility of $\delta$ and $\tilde \delta$.
\end{proof}

Let $X$ and $Y$ be smooth varieties over an algebraically closed field $k$ and let $\ell$ be a prime invertible in $k$.
The natural exterior product map on cycles yields a map
\begin{align}  \label{def:exterior-product}
\frac{A^1(X)[\ell^r]}{\ell^rA^1(X)[\ell^{2r}] } \otimes A^i(Y)/\ell^r\longrightarrow  \frac{A^{i+1}(X\times Y)[\ell^r]}{\ell^r A^{i+1}(X\times Y)[\ell^{2r}]} ,\ \ [z_1]\otimes[z_2]\mapsto [z_1\times z_2] .
\end{align} 
To compare that map with $\Lambda$ from Lemma \ref{lem:Lambda}, we consider the diagram 
\begin{align} \label{diag:products}
\xymatrix{
\frac{A^1(X)[\ell^r]}{\ell^rA^1(X)[\ell^{2r}] }\otimes A^i(Y)/\ell^r\ar[r]^-{(\ref{def:exterior-product})}&  \frac{A^{i+1}(X\times Y)[\ell^r]}{\ell^r A^{i+1}(X\times Y)[\ell^{2r}]} \\
\tilde \delta( H^1(X,\mu_{\ell^r}))  \otimes \frac{ H^{2i-1}_{i-2,nr}( Y, \mu_{\ell^r}^{\otimes i})}{H^{2i-1}(Y,\mu_{\ell^r}^{\otimes i}) } \ar@{^{(}->}[u] \ar[r]^-{\Lambda}& \ar@{^{(}->}[u]  \frac{ H^{2i}(F_{i-1}(X\times Y), \mu_{\ell^r}^{\otimes i+1})}{\widetilde G^{i+1}H^{2i} (F_{i-1}(X\times Y), \mu_{\ell^r}^{\otimes i+1})},
}
\end{align}
where the vertical injection on the left comes from Lemma \ref{lem:deltaH^1} and Theorem \ref{thm:refined},  while the vertical injection on the right stems from Theorem \ref{thm:refined:Ti[ell]}.

\begin{lemma} \label{lem:diag-commutes}
The diagram (\ref{diag:products}) commutes. 
\end{lemma}
\begin{proof}
Let $\alpha\in H^1(X,\mu_{\ell^r})$ and $[\beta]\in   H^{2i-1}_{i-2,nr}( Y, \mu_{\ell^r}^{\otimes i})/H^{2i-1}(Y,\mu_{\ell^r}^{\otimes i})  $ with representative $\beta\in H^{2i-1} ( F_{i-1}Y, \mu_{\ell^r}^{\otimes i})$.
We claim first that 
\begin{align}\label{eq:delta(beta)}
\delta(\beta)\in \im(H^{2i} (  Y, \Z_{\ell}(i))\to H^{2i} ( F_{i-1}Y, \Z_\ell(i))) .
\end{align}
By (\ref{eq:lem:Fj}), it suffices to show that 
$$
\del (\delta(\beta))\in \bigoplus_{y\in Y^{(i)}}H^1(y,\Z_\ell(0))
$$
vanishes, which holds because the right hand side is torsion-free, while $  \delta(\beta) $ is torsion.

Mapping $\alpha\otimes [\beta]$ to the right upper corner  in (\ref{diag:products})  via
(\ref{def:exterior-product}), we get the element
$$
z_1\times [\del(\beta)] \in  A^{i+1}(X\times Y)[\ell^r]/\ell^rA^{i+1}(X\times Y)[\ell^{2r}]  ,
$$
where $z_1\in A^1(X)[\ell^r]$ is the unique class with $\cl_X^1(z_1)=\delta (\alpha)\in H^2(X,\Z_\ell(1))$ and where 
$$
\del(\beta)\in \bigoplus_{y\in Y^{(i)}}[y]\Z_\ell .
$$
On the other hand,  the image of $\alpha\otimes [\beta]$ in $ \frac{A^{i+1}(X\times Y)[\ell^r]}{\ell^r A^{i+1}(X\times Y)[\ell^{2r}]}$ via $\Lambda$ is represented by 
$$
\del \gamma\in \bigoplus_{z\in (X\times Y)^{(i+1)}}[z]\Z/\ell^r  ,
$$
where $\gamma\in H^{2i+1}(F_i(X\times Y),\mu_{\ell^r}^{\otimes i+1})) $ is a lift of $\tilde \delta(p^\ast \alpha\cup q^\ast \beta)$.
The derivation property of $\tilde \delta$ (see (\ref{eq:derivation-property})) and functoriality of the Bockstein $\tilde \delta$ yields
$$
\tilde \delta(p^\ast \alpha\cup q^\ast \beta)=p^\ast (\tilde \delta(\alpha))\cup q^\ast\beta- p^\ast  \alpha\cup q^\ast (  \tilde \delta (\beta) ) \in   H^{2i+1}(F_{i-1}(X\times Y),\mu_{\ell^r}^{\otimes i+1}) .
$$
By (\ref{eq:delta(beta)}),  $\delta(\beta)$ and hence the reduction $\tilde \delta (\beta)$ extends to a class on $Y$.  
Since $\alpha $ is a global class on   $X$,  we find that for any lift $\gamma' \in H^{2i+1}(F_i(X\times Y),\mu_{\ell^r}^{\otimes i+1}) $  of $p^\ast (\tilde \delta(\alpha))\cup q^\ast\beta$,
$$
[\del \gamma]=[\del \gamma']\in A^{i+1}(X\times Y)/\ell^r.
$$
Since $\tilde \delta(\alpha)$ is an algebraic class, it is supported on some divisor $D\subset X$.
Since $\beta$ is defined away from a codimension $i$ subset $Z\subset Y$,  $p^\ast (\tilde \delta(\alpha))\cup q^\ast\beta$ is defined away from the codimension $i+1$ subset $D\times Z$ and so we may pick a lift $\gamma'$ as above in such a way that
$$
\del \gamma'=\del p^\ast (\tilde \delta(\alpha))\cup q^\ast\beta=z_1\times \del \beta \in \bigoplus_{z\in (X\times Y)^{(i+1)}}[z]\Z/\ell^r  .
$$
This concludes the proof of the lemma.
\end{proof}

\section{Smooth specialization of refined unramified cohomology}  \label{sec:sp}

Let $\kappa$ be a  field and let $R=\mathcal O_{B,0}$ be the local ring of a smooth pointed curve $(B,0)$ over $\kappa$, where $0\in B$ is a $\kappa$-rational point.
Let $K=\Frac R$ and let $\ell$ be a prime invertible in $\kappa$.
The main result of this section is as follows.

\begin{proposition} \label{prop:specialization}
Let $R$ be as above, fix a uniformizer $\pi\in R$, and let $\mathcal X\to \Spec R$ be a smooth morphism with equi-dimensional fibres $X_0=\mathcal X\times_R\kappa$ and $X_\eta=\mathcal X\times_RK$.
There are linear specialization maps: 
\begin{align}\label{eq:sp-unramified} 
sp:H^i(F_jX_\eta,\mu_{\ell^r}^{\otimes n}) \longrightarrow H ^i(F_jX_0,\mu_{\ell^r}^{\otimes n})
\end{align}
with the following properties:
\begin{enumerate}
\setcounter{enumi}{-1}
\item $sp$ does not depend on $\pi$ if $\kappa$ is algebraically closed;\label{item:sp-0}
\item $sp$ respects  the filtration $F^\ast$;\label{item:sp-1}
\item if $\mathcal U\subset \mathcal X$ is an open subset with   $F_{j+1}X_\eta\subset U_\eta$ and $F_{j+1}X_0\subset U_0$, then any class $\alpha\in H^i(\mathcal U,\mu_{\ell^r}^{\otimes n})$ with restriction $[\alpha|_{U_\eta}]\in H_{j,nr}^i(X_\eta,\mu_{\ell^r}^{\otimes n})$ satisfies $sp([\alpha|_{U_\eta}])=[\alpha|_{U_0}] $;\label{item:sp-2}
\item the pushforwards from Lemma \ref{lem:pushforward} commute with specialization:  if $\mathcal Y\to \Spec R$ is a smooth morphism with equi-dimensional fibres $Y_\eta$ and $Y_0$,  and   $f:\mathcal X\to \mathcal Y$ is a proper $R$-morphism of pure relative dimension,  \label{item:sp-2.5}
then $(f|_{X_0})_\ast \circ sp=sp\circ (f|_{X_\eta})_\ast$;
\item $sp$   induces a specialization map between geometric fibres $X_{\overline \eta}:=X_\eta \times_K\overline K$ and $X_{\overline 0}:=X_0\times_\kappa \overline \kappa$: \label{item:sp-3}
$$
\bar{sp}:H ^i(F_jX_{\bar \eta},\mu_{\ell^r}^{\otimes n}) \longrightarrow H ^i(F_jX_{\overline 0},\mu_{\ell^r}^{\otimes n}).
$$
The map $\bar{sp}$ does not depend on the choice of $\pi$ and respects the filtration $F^\ast$. 
\end{enumerate}
\end{proposition}

\begin{corollary} \label{cor:specialization}
In the above notation, there are well-defined specialization maps
$$
sp:H^i_{j,nr}( X_\eta,\mu_{\ell^r}^{\otimes n}) \longrightarrow H ^i_{j,nr}( X_0,\mu_{\ell^r}^{\otimes n})
\ \ \ \text{and}\ \ \ 
\bar {sp}:H^i_{j,nr}( X_{\overline \eta},\mu_{\ell^r}^{\otimes n}) \longrightarrow H ^i_{j,nr}( X_{\overline 0},\mu_{\ell^r}^{\otimes n}) .
$$
\end{corollary}

\begin{remark}
In the above corollary, $sp$ may depend on the choice of the uniformizer $\pi\in R$, while $\bar {sp}$ is independent of that choice.
\end{remark}

\subsection{Construction - part 1} \label{subsec:specialization-smooth-family}

Let $\mathcal U\to \Spec R$ be a smooth  morphism with equi-dimensional fibres $U_0=\mathcal U\times_R \kappa$ and $U_\eta=\mathcal U\times_R K$.
By Remark \ref{rem:mathcal-X},  (\ref{eq:Gysin}) yields an exact sequence
\begin{align}\label{eq:Gysin-2}
H^i(\mathcal U,\mu_{\ell^r}^{\otimes n})\longrightarrow H^i(U_\eta,\mu_{\ell^r}^{\otimes n})\stackrel{\del}\longrightarrow H^{i-1}(U_0,\mu_{\ell^r}^{\otimes n-1})\longrightarrow H^{i+1}(\mathcal U,\mu_{\ell^r}^{\otimes n}) .
\end{align}

We fix a uniformizer $\pi\in R$. 
Then $\pi$ gives rise to a class in $ H^1(K,\mu_{\ell^r})\cong K^\ast/(K^{\ast})^{\ell ^r}$ and so we get a class $(\pi)\in H^1(U_\eta, \mu_{\ell^r})$ via pullback.
We then define 
$$
sp:H^i(U_\eta,\mu_{\ell^r}^{\otimes n})\longrightarrow H^{i}(U_0,\mu_{\ell^r}^{\otimes n}),\ \ \alpha\mapsto -\del((\pi)\cup \alpha) ,
$$
where $\del$ is the residue map in  (\ref{eq:Gysin-2}). 
(If the residue field $\kappa$ is not algebraically closed, then the class $(\pi)$ and hence the map $sp$ may depend on the choice of $\pi$, cf.\ proof of Proposition \ref{prop:specialization} below.)  

\begin{lemma}\label{lem:sp(a)=tildea}
If $\alpha\in H^i(U_\eta,\mu_{\ell^r}^{\otimes n})$ extends to a class $\tilde \alpha\in H^i(\mathcal U,\mu_{\ell^r}^{\otimes n})$, then
$ 
sp(\alpha)=\tilde \alpha|_{U_0} 
$.  
\end{lemma}
\begin{proof}
By (\ref{eq:Gysin-cup}),  
$ 
-\del((\pi)\cup \alpha)=-(\del(\pi) ) \cup \widetilde \alpha|_{U_0}
$ 
and so the result follows from the fact that $\del(\pi)=-1\in H^0(U_0,\mu_{\ell ^r}^{\otimes 0})=\Z/\ell^r$,  see e.g.\ \cite[(P6) in Proposition 6.6]{Sch-torsion1}. 
\end{proof}

\begin{lemma}\label{lem:sp(a)=0-Hochschild-Serre}
In the above notation
$$
sp(\ker( H^i(U_\eta,\mu_{\ell^r}^{\otimes n})\to H^i(U_{\overline \eta},\mu_{\ell^r}^{\otimes n}) ) )\subset \ker( H^{i}(U_0,\mu_{\ell^r}^{\otimes n}) \to  H^{i}(U_{\overline 0},\mu_{\ell^r}^{\otimes n})  ) .
$$ 
\end{lemma}
\begin{proof}
Replacing $\kappa$ by the algebraic closure $\overline \kappa$, we may assume that $\kappa$ is algebraically closed.
Then $K$ has cohomological dimension one and so the Hochschild--Serre spectral sequence \cite[p.\ 105, III.2.20]{milne} yields an exact sequence 
$$
0\longrightarrow H^1(K,H^{i-1}(U_{\overline \eta},\mu_{\ell^r}^{\otimes n}))\longrightarrow H^i(U_\eta,\mu_{\ell^r}^{\otimes n})\longrightarrow H^i(U_{\overline \eta},\mu_{\ell^r}^{\otimes n}) .
$$
For any glass $\beta\in H^j(U_\eta,\mu_{\ell^r}^{\otimes m})$, cup product with $\beta$ is compatible with the above spectral sequence.
The action on the $E_2$-page is induced by the action of $\beta$ on the coefficients and hence only depends on the image of $\beta$ in $H^j(U_{\bar \eta},\mu_{\ell^r}^{\otimes m})$. 
This description shows that cup product with the class $(\pi)\in H^1(K,\mu_{\ell^r})$ acts trivially on the  $E_2$-page of the above spectral sequence.
It thus follows from the above short exact sequence that for any class
 $\alpha\in  \ker( H^i(U_\eta,\mu_{\ell^r}^{\otimes n})\to H^i(U_{\overline \eta},\mu_{\ell^r}^{\otimes n}) )$, we have
$$
 (\pi)\cup \alpha=0\in  H^{i+1}(U_\eta,\mu_{\ell^r}^{\otimes n+1}) ,
$$ 
and so $sp(\alpha)=0$.
This proves the lemma.
\end{proof}

\subsection{Construction - part 2} \label{subsec:specialization-refined}


Let $R$ and $\pi$ be as above and let $\mathcal X\to \Spec R$ be a smooth  morphism with equi-dimensional fibres.
For any $j\geq i$, we define a specialization map
$$
sp:  H^i(F_jX_\eta,\mu_{\ell^r}^{\otimes n}) \longrightarrow  H^i(F_jX_0,\mu_{\ell^r}^{\otimes n})
$$
as follows.
Let $\alpha\in  H^i(F_jX_\eta,\mu_{\ell^r}^{\otimes n})$.
Then there is a closed subset $Z_\eta\subset X_{\eta}$ of codimension $>j$ such that $\alpha=[\alpha_{U_\eta}]$ is represented by a class
$$
\alpha_{U_\eta}\in H^i(U_\eta,\mu_{\ell^r}^{\otimes n} ),
$$
where $U_\eta=X_\eta\setminus Z_\eta$.
The closure $\mathcal Z\subset \mathcal X$ of $Z_\eta$ is automatically flat over $R$ and so the special fibre $Z_0$ has codimension $>j$ in $X_0$.
Let $\mathcal U:=\mathcal X\setminus \mathcal Z $ with special fibre $U_0=X_0\setminus Z_0$.
By Section \ref{subsec:specialization-smooth-family}, we get a class
$ 
\alpha_{U_0}:=sp(\alpha_{U_\eta})\in  H^i(U_0,\mu_{\ell^r}^{\otimes n} ) 
$ and define
$$
sp(\alpha)=[\alpha_{U_0}]\in   H^i(F_jX_0,\mu_{\ell^r}^{\otimes n}).
$$
Functoriality of the Gysin sequence with respect to open immersions (see Lemma \ref{lem:Gysin}) immediately shows that this definition is well-defined.

\begin{lemma} \label{lem:specialize-filtration-refined}
The specialization map
$$
sp:  H^i(F_jX_\eta,\mu_{\ell^r}^{\otimes n}) \longrightarrow  H^i(F_jX_0,\mu_{\ell^r}^{\otimes n}),\ \ \ \alpha\mapsto [\alpha_{U_0}]
$$
defined above is compatible with the filtration  $F^\ast$.
\end{lemma}
\begin{proof}
Let $m\geq j$
 and assume in the above notation that $\alpha \in F^m H^i(F_jX_\eta,\mu_{\ell^r}^{\otimes n} )$.
 This means that we may choose $\alpha_{U_\eta}\in H^i(U_\eta,\mu_{\ell^r}^{\otimes n} )$ in such a way that $Z_\eta$ has actually codimension $>m$ in $X_\eta$.
 But then the above construction immediately shows that $sp(\alpha)$ lifts to a class in $ H^i(F_mX_0,\mu_{\ell^r}^{\otimes n})$ and hence lies in $F^m H^i(F_jX_0,\mu_{\ell^r}^{\otimes n})$.
%
\end{proof}
 
\begin{lemma} \label{lem:sp-commutes-pushforwards}
Let $\mathcal Y\to \Spec R$ be another smooth morphism with equi-dimensional fibres $Y_\eta$ and $Y_0$ and let $f:\mathcal X\to \mathcal Y$ be a proper $R$-morphism of pure relative codimension $c:=\dim X_\eta-\dim Y_\eta$.
Then the following diagram commutes:
$$
\xymatrix{
 H^i(F_jX_\eta,\mu_{\ell^r}^{\otimes n})\ar[d]^{f_\ast} \ar[r]^{sp} &  H^i(F_jX_0,\mu_{\ell^r}^{\otimes n}) \ar[d]^{f_\ast}  \\
 H^{i+2c}(F_{j+c}Y_\eta,\mu_{\ell^r}^{\otimes n+c}) \ar[r]^{sp} &  H^{i+2c}(F_{j+c}Y_0,\mu_{\ell^r}^{\otimes n+c}) .
}
$$
\end{lemma}
\begin{proof}
This follows directly from the projection formula and the fact that the Gysin sequence is functorial with respect to proper pushforwards, see  Lemma \ref{lem:Gysin}.
\end{proof}

\subsection{Proof of Proposition \ref{prop:specialization}}

\begin{proof}[Proof of Proposition \ref{prop:specialization}] 
The existence of the specialization map together with item (\ref{item:sp-1}) follows from the construction in Section \ref{subsec:specialization-refined} and Lemma  \ref{lem:specialize-filtration-refined}.
Items (\ref{item:sp-2}) and (\ref{item:sp-2.5}) follow from  Lemmas   \ref{lem:sp(a)=tildea} and \ref{lem:sp-commutes-pushforwards}, respectively. 
By construction, $sp$ depends on the class $(\pi)\in H^1(K,\mu_{\ell^r})$  of $\pi$.
On the other hand, the map will not change if we replace $R$ by its completion, and so we may from now on assume that $R$ is complete.
If $\kappa$ is algebraically closed,  then $H^1(\Spec R,\mu_{\ell^r})\cong H^1(\kappa,\mu_{\ell^r})=0$ by \cite[Corollary VI.2.7]{milne} and so $\del:H^1(K,\mu_{\ell^r})\to H^0(\kappa,\Z/\ell^r)=\Z/\ell^r$ is an isomorphism, which implies that in this case the class $(\pi)\in H^1(K,\mu_{\ell^r})$  is independent of $\pi$.
Hence, $sp$ does not depend on the choice of $\pi$ if $\kappa$ is algebraically closed, as claimed in item (\ref{item:sp-0}).

To prove the existence of the specialization map on geometric fibres in item (\ref{item:sp-3}),  we may up to replacing $\kappa$ by its algebraic closure, assume that $\kappa$ is algebraically closed.
It follows from what we have said above that $sp$ is in this situation independent of the choice of $\pi$.
To prove (\ref{item:sp-3}), it thus suffices by Lemmas \ref{lem:sp(a)=0-Hochschild-Serre} and \ref{lem:specialize-filtration-refined} to show that  any class  $[\alpha]\in H ^i(F_jX_{\bar \eta},\mu_{\ell^r}^{\otimes n}) $ comes up to a finite base change from $H ^i(F_jX_{ \eta},\mu_{\ell^r}^{\otimes n}) $.
To prove this, let $U_{\overline \eta}\subset X_{\overline \eta}$ be an open subset with $F_{j}X_{\overline \eta}\subset U_{\overline \eta}$ and let $\alpha\in H^i(U_{\overline \eta},\mu_{\ell^r}^{\otimes n})$.
Taking the Galois closure of the complement of $U_{\overline \eta}\subset X_{\overline \eta} $, we may up to shrinking $U_{\overline \eta}$ assume that $U_{\overline \eta}=U_\eta\times \overline K$ for some open subset $U_\eta\subset X_\eta$ with $F_{j}X_\eta\subset U_\eta$.
Since $H^i(U_{\overline \eta},\mu_{\ell^r}^{\otimes n})$ is a finite group,  we may up to a finite base change also assume that the Galois group $G=\Gal(\overline K/K)$  acts trivially on $\alpha$.
Since $\kappa$ is algebraically closed,  $K$ has cohomological dimension $1$.
The Hochschild--Serre spectral sequence \cite[p.\ 105, III.2.20]{milne} thus shows that $\alpha$ lies in the image of
$
H^i(U_{ \eta},\mu_{\ell^r}^{\otimes n})\to H^i(U_{\overline \eta},\mu_{\ell^r}^{\otimes n})
$ 
as we want.
This concludes the proof of the proposition.
\end{proof}

\begin{proof}[Proof of Corollary \ref{cor:specialization}]
Since $H_{j,nr}^i(X,A(n))=F^{j+1}H^i(F_jX,A(n))$, the corollary follows  from
  Proposition \ref{prop:specialization}. 
\end{proof}

\begin{remark} \label{rem:sp}
The above specialization maps yield as a special case (where $j=0$)   maps 
\begin{align}\label{eq:sp-Galois}
sp:H^i(K(X_\eta),\mu_{\ell^r}^{\otimes n})\longrightarrow H^i(\kappa(X_0),\mu_{\ell^r}^{\otimes n})
\end{align}
that are well-known from Galois cohomology.  
On the other hand, the specialization maps between refined unramified cohomology from Corollary \ref{cor:specialization} seem 
 new even in the case $j=0$, where the groups in question coincide with  traditional unramified cohomology.
In fact, the situation is somewhat subtle: Unramified classes may in proper flat families specialize  via (\ref{eq:sp-Galois}) to ramified classes and this was the main technique to prove nontriviality of certain unramified classes in \cite[Section 6]{Sch-JAMS}.
The main point is that the families $\mathcal X\to \Spec R$ considered in \emph{loc.\ cit.\ }are flat but not smooth and our results here show that the ramification has to lie on the singular locus of $\mathcal X$ over $R$.  
\end{remark}

\section{A vanishing result} \label{sec:vanishing}

Let $\kappa$ be an algebraically closed field.
We assume for simplicity that $\kappa$ has characteristic zero. 
Let $R$ be the local ring of a smooth pointed curve $(B,0)$ over $\kappa$  with fraction field $K=\Frac R$. 
Let $\mathcal X$, $\mathcal Y$, and $\mathcal W$ be flat  $R$-schemes with equi-dimensional fibres and let 
$$
p:\mathcal W\longrightarrow \mathcal X\ \ \text{and}\ \ q:\mathcal W\longrightarrow \mathcal Y
$$
be $R$-morphisms with $c:=\codim_{\mathcal Y}(q(\mathcal W))$.
Assume that the following holds:
\begin{enumerate}  
\item $\mathcal X$ is regular, $\mathcal W$ is integral and normal,    $\mathcal Y\to \Spec R$ is smooth;\label{item:vanishing-2}
\item  $q$ is proper and generically finite onto its image $q(\mathcal W)\subset \mathcal Y$;\label{item:vanishing-3}
\item $p$ is dominant.\label{item:vanishing-4}
\end{enumerate}

Since $\kappa$ has characteristic zero, the generic fibre $W_\eta$ of $\mathcal W$ is generically smooth.
Lemma \ref{lem:funct-FjX} thus yields pushforward maps
$$
q_\ast:H^i(F_0W_\eta,\mu_{\ell^r}^{\otimes n})\longrightarrow  H^{i+2c}( F_cY_\eta,\mu_{\ell^r}^{\otimes n+c}) ,
$$
where $c:=\codim_{\mathcal Y}(q(\mathcal W))$.

\begin{lemma} \label{lem:vanishing}
In the above notation, let $\ell$ be a prime and let $\alpha\in H^i_{nr}(\mathcal X,\mu_{\ell ^r}^{\otimes n})$ be an unramified class on $\mathcal X$ whose restriction to the generic point of any component of the special fibre $X_0$ vanishes.
Then for any $\xi \in H^j(K(W_\eta),\mu_{\ell^r}^{\otimes m})$, the class
$$
 q_\ast(p^\ast\alpha\cup \xi) \in H^{i+j+2c}( F_cY_\eta,\mu_{\ell^r}^{\otimes n+m+c})
$$
lies in the kernel of  
$
sp: H^{i+j+2c}( F_c Y_\eta,\mu_{\ell^r}^{\otimes n+m+c})\to  H^{i+j+2c}( F_cY_0,\mu_{\ell^r}^{\otimes n+m+c})
$
from Section \ref{subsec:specialization-refined}.
\end{lemma}
\begin{proof}
Since $\kappa$ has characteristic zero and $\mathcal W$ extends to a normal $\kappa$-variety over some neighbourhood of $0\in B$, we may up to shrinking $\mathcal W$ assume that $\mathcal W\to \Spec R$ is smooth. 
By linearity of $q_\ast$, we may up to shrinking $\mathcal W$ assume that $W_0$ is irreducible.
By Lemma \ref{lem:sp-commutes-pushforwards},  it then suffices to show that 
$$
sp(p^\ast\alpha\cup \xi)=0\in H^{i+j}( F_0W_0,\mu_{\ell^r}^{\otimes n+m}).
$$
This vanishing in turn follows from the fact that the residue map
$$
\del:H^{i+j+1}(F_0W_\eta,\mu_{\ell^r}^{\otimes n+m+1})\longrightarrow H^{i+j}(\kappa(W_0),\mu_{\ell^r}^{\otimes n+m})
$$
factorizes through the cohomology of the completion $\widehat{\mathcal O_{\mathcal W,W_0}}$ of the local ring of $\mathcal W$ at the generic point of $W_0$ together with the claim that
$$
p^\ast\alpha=0\in H^i(\widehat{\mathcal O_{\mathcal W,W_0}},\mu_{\ell^r}^{\otimes n}).
$$
To prove this last claim, note that
  the restriction map
$$
H^i(\widehat{\mathcal O_{\mathcal W,W_0}},\mu_{\ell^r}^{\otimes n})\longrightarrow H^i(\kappa(W_0),\mu_{\ell^r}^{\otimes n})
$$
is injective (see  \cite[Corollary  VI.2.7]{milne}) and $p^\ast \alpha$ lies in the kernel of the above map because $\alpha$ vanishes on any component of the special fibre $\mathcal X$ by assumption.
(This last step uses that $p$ is dominant by assumption (\ref{item:vanishing-4}).)
This concludes the proof of the lemma. 
\end{proof}

\section{An injectivity theorem}


\begin{theorem} \label{thm:product-2}
Let $\kappa$ be an algebraically closed field of characteristic zero and let $R$ be the local ring of a smooth pointed curve $(B,0)$ over $\kappa$. 
Let $k$ be an algebraic closure of $\Frac R$ and let $\ell$ be a prime. 
Assume that there is a proper strictly semi-stable $R$-scheme $\mathcal X\to \Spec R$ with connected fibres of relative dimension two, such that the following holds, where $X:=\mathcal X\times_Rk$ denotes the geometric generic fibre: 
\begin{enumerate}[label=\textbf{C\arabic*}]
\item the restriction map $H^2_{nr}(\mathcal X,\mu_{\ell ^r} )\to H^2_{nr}(X ,\mu_{\ell ^r} )$ is surjective;\label{item:Br(X-eta)-to-Br(X)}
\item \label{item:trivial-restr-to-X0} for each component $X_{0i}$ of $X_0$, the restriction map
$
H^2_{nr}(\mathcal X,\mu_{\ell ^r} )\to H^2_{nr}(X_{0i},\mu_{\ell ^r} )
$ is zero.  
\end{enumerate} 

Then for any smooth projective variety $Y_{\kappa}$ over $\kappa$ with base change $Y=Y_\kappa\times_{\kappa} k$,  and for any free $\Z/\ell^r$-module $M\subset  \tilde \delta \left(H^1(X,\mu_{\ell^r}) \right) $,  the following composition is injective:
$$
M \otimes \frac{   H^{2i-1}_{i-2,nr}(Y_\kappa, \mu_{\ell^r}^{\otimes i})}{H^{2i-1}(Y_\kappa,\mu_{\ell^r}^{\otimes i}) } \longrightarrow M \otimes \frac{   H^{2i-1}_{i-2,nr}(Y , \mu_{\ell^r}^{\otimes i})}{H^{2i-1}(Y ,\mu_{\ell^r}^{\otimes i}) } \stackrel{\Lambda}\longrightarrow  \frac{  H^{2i}(F_{i-1}(X\times Y),\mu_{\ell^r}^{\otimes i+1})}{\widetilde G^{i+1}H^{2i} (F_{i-1}(X\times Y),\mu_{\ell^r}^{\otimes i+1})} ,
$$
where $\Lambda$ is the map  from Lemma \ref{lem:Lambda}.
\end{theorem} 


\begin{proof}  
Let   $\alpha_1,\dots ,\alpha_n\in H^1(X,\mu_{\ell^r} )$ such that the free $\Z/\ell^r$-module $M$ is given by
\begin{align}\label{eq:M}
M=  \bigoplus_{j=1}^n  \tilde \delta (\alpha_j)\Z/\ell^{r}\Z\subset  \tilde \delta \left(H^1(X,\mu_{\ell^r}) \right)   .
\end{align}
Let further 
\begin{align}\label{eq:beta_kappa}
\beta_{\kappa,j}\in H^{2i-1}(F_{i-1}Y_\kappa,\mu_{\ell^r}^{\otimes i})\ \ \ \ \text{for}\ \ \ j=1,\dots ,n
\end{align}
and denote the image of $\beta_{\kappa,j}$ in $H^{2i-1}(F_{i-1}Y,\mu_{\ell^r}^{\otimes i}) $ by $\beta_j$.
For a contradiction, we assume that
\begin{align}\label{eq:thm:inj:nonzero}
\sum_{j=1}^n \tilde \delta(\alpha_j)\otimes [\beta_{\kappa,j}]\neq 0\in M \otimes \frac{ H^{2i-1}_{i-2,nr}(Y_\kappa, \mu_{\ell^r}^{\otimes i})}{H^{2i-1}(Y_\kappa,\mu_{\ell^r}^{\otimes i})} ,
\end{align}
and
$$
\Lambda(\sum_{j=1}^n \tilde \delta(\alpha_j)\otimes [\beta_{j}] )=\sum_{j=1}^n [p^\ast \alpha_j\cup \beta_j]=0\in \frac{ H^{2i}(F_{i-1}(X\times Y),\mu_{\ell^r}^{\otimes i+1})}{\widetilde G^{i+1}H^{2i} (F_{i-1}(X\times Y),\mu_{\ell^r}^{\otimes i+1})} .
$$
By the derivation property for $\tilde \delta$  (see (\ref{eq:derivation-property})), the latter is equivalent to saying that   
\begin{align}\label{eq:thm:inj:extends}
\sum_{j=1}^n p^\ast  (\tilde \delta \alpha_j)\cup q^\ast \beta_j -\sum_{j=1}^n p^\ast(\alpha_j)\cup q^\ast(\tilde \delta \beta_j)\in F^{i+1} H^{2i+1}(F_{i-1}(X\times Y),\mu_{\ell^r}^{\otimes i+1}),
\end{align}
which by (\ref{eq:lem:Fj}) means that the above
class in $H^{2i+1}(F_{i-1}(X\times Y),\mu_{\ell^r}^{\otimes i+1})$ extends to a class on $X\times Y$.
The theorem will be proven if we derive a contradiction from (\ref{eq:thm:inj:nonzero}) and (\ref{eq:thm:inj:extends}).

Since $X$ is a smooth proper connected surface over an algebraically closed field, 
$$
  H^4(X,\mu_{\ell^r}^{\otimes 2})= \Z/\ell^r\cdot \cl_X^2(pt)
$$
where $\cl_X^2(pt)$ denotes the cycle class of a closed point on $X$.

\bigskip

\noindent
\textbf{Step 1.} For any $j_0\in \{1,\dots ,n\}$, there is a class $\overline{\tilde \delta(\alpha_{j_0})}\in H^2(X,\mu_{\ell^r})$ with
$$
\overline{\tilde \delta(\alpha_{j_0})}\cup 
\tilde \delta(\alpha_{j})=
\begin{cases}
\cl_X^2(pt)\ \ \ \ &\text{if $j=j_0$};\\
0\ \ \ \ \ \ \ &\text{otherwise}.
\end{cases}
$$
Moreover, the above property does not change if we add to $\overline{\tilde \delta(\alpha_{j_0})}$  a class that lifts to $H^2(X,\Z_\ell(1))$. 
\begin{proof} 
Since $X$ is a smooth proper surface over the algebraically closed field $k$, Poincar\'e duality implies that the cup product pairing on $H^2(X,\mu_{\ell^r})$ is perfect, see e.g.\ \cite[Theorem 24.1]{milne-2}. 
This  implies the existence of the classes $\overline{\tilde \delta(\alpha_{j_0})}$, because $\tilde \delta(\alpha_j)$ are $\Z/\ell^r$-linearly independent for $j=1,\dots ,n$ by (\ref{eq:M}).

By the compatibility of $\delta$ and $\tilde \delta$, the class $ \tilde \delta(\alpha_{j})$ is the reduction modulo $\ell^r$ of a torsion class in $H^2(X,\Z_\ell(1))$.
Since any class in $H^2(X,\Z_\ell(1))$ has trivial cup product pairing with a torsion class in $H^2(X,\Z_\ell(1))$, we find that the properties in question do not change if we add to $\overline{\tilde \delta(\alpha_{j_0})}$  a class that lifts to $H^2(X,\Z_\ell(1))$.
This concludes step 1.
\end{proof}

By (\ref{eq:beta_kappa}), there is an open subset $U_\kappa \subset Y_\kappa$ with $\codim_{Y_\kappa}(Y_\kappa\setminus U_\kappa)>i-1$
and such that  for all $j=1,\dots ,n$,  
$$
\beta_{\kappa , j}\in H^{2i-1}(U_\kappa,\mu_{\ell^r}^{\otimes i})\ \ \text{and}\ \ \ \beta_j\in H^{2i-1}(U,\mu_{\ell^r}^{\otimes i})
$$ 
where  $U=U_\kappa \times k$, and where by slight abuse of notation we do not distinguish between the above classes on the open subsets $U_\kappa$ and $U$ and their restrictions to  $F_{i-1}Y_\kappa$ and $F_{i-1}Y$, respectively. 

Assumption (\ref{eq:thm:inj:extends}) together with Lemma \ref{lem:les}   implies that there is a class 
$$
\xi\in \bigoplus_{w\in (X\times U)^{(i)}} H^1( w ,\mu_{\ell^r})
$$
such that
\begin{align}\label{eq:thm:inj:extends-2}
\gamma:=\sum_{j=1}^n p^\ast  (\tilde \delta \alpha_j)\cup q^\ast \beta_j -\sum_{j=1}^n p^\ast(\alpha_j)\cup q^\ast(\tilde \delta \beta_j)+\iota_\ast \xi\in  H^{2i+1}(F_{i}(X\times U),\mu_{\ell^r}^{\otimes i+1})
\end{align} 
extends to a class on $X\times Y$.
(Note that the above class lies on $F_i(X\times U)$, not on $F_i(X\times Y)$.)

\bigskip

\noindent
\textbf{Step 2.} The map 
$$
q_\ast: H^{2i+3}(F_{i}(X\times U),\mu_{\ell^r}^{\otimes i+2})\longrightarrow   H^{2i-1}(F_{i-2}U,\mu_{\ell^r}^{\otimes i})
$$
from Lemma \ref{lem:funct-FjX} satisfies
  for any $j_0\in \{1,\dots ,n\}$:
\begin{align} \label{eq:q_ast-extends}
q_\ast\left(  p^\ast \left( \overline{\tilde \delta(\alpha_{j_0})}\right) \cup  \gamma\right) = \beta_{j_0}+ q_\ast \left( p^\ast \left( \overline{\tilde \delta(\alpha_{j_0})}\right) \cup \iota_\ast \xi \right) \in  H^{2i-1}(F_{i-2}U,\mu_{\ell^r}^{\otimes i}).
\end{align}
\begin{proof}
This is  a consequence of  (\ref{eq:thm:inj:extends-2}), the computation in Step 1 and the fact that 
$$
q_\ast \left( p^\ast \left( \overline{\tilde \delta(\alpha_{j_0})} \cup \alpha_j\right) \cup q^\ast(\tilde \delta \beta_j)\right) =0 ,
$$
which follows from the projection formula, 
because $ q_\ast(p^\ast(\overline{\tilde \delta(\alpha_{j_0})}\cup  \alpha_j))=0$ for degree reasons. 
This concludes step 2.
\end{proof}

Recall that $U=U_\kappa\times_\kappa k$ and consider the smooth $R$-scheme $\mathcal U:=U_\kappa\times_\kappa R$.
Since $\kappa$ is algebraically closed,  applying item (\ref{item:sp-3}) of Proposition \ref{prop:specialization} to this family, we get a specialization map
$$
\bar{sp}:H^{2i-1}(F_{i-2}U,\mu_{\ell^r}^{\otimes i})\longrightarrow H^{2i-1}(F_{i-2}U_\kappa,\mu_{\ell^r}^{\otimes i}) .
$$   
\smallskip

\noindent
\textbf{Step 3.} 
We have
$$
\bar{sp}\left(q_\ast \left( p^\ast \left( \overline{\tilde \delta(\alpha_{j_0})}\right) \cup \iota_\ast \xi \right) \right)=0\in H^{2i-1}(F_{i-2}U_\kappa,\mu_{\ell^r}^{\otimes i}).
$$

\begin{proof}
By linearity, we may assume that $\xi\in H^1(\kappa(w),\mu_{\ell^r})$ for some $w\in (X\times U)^{(i)}$.
Since $X$ is a surface, $q(w)\in U$ has codimension at least $i-2$.
If $q(w)$ has codimension greater than $i-2$, then 
$$
q_\ast \left( p^\ast \left( \overline{\tilde \delta(\alpha_{j_0})}\right) \cup \iota_\ast \xi \right)=0\in  H^{2i-1}(F_{i-2}U,\mu_{\ell^r}^{\otimes i})
$$
by Lemma \ref{lem:les}
and we are done.
Hence, we may assume $q(w)\in U^{(i-2)}$.

The pullback of $\overline{\tilde \delta(\alpha_{j_0})} $ via $p^\ast $ factorizes through  the restriction 
$$ 
\left( \overline{\tilde \delta(\alpha_{j_0})}\right)|_{p(w)} \in H^2(\kappa(p(w)),\mu_{\ell^r}).
$$
If $p(w)$ is not the generic point of $X$, then the latter group vanishes by dimension reasons and we are done.
Hence, we may assume that $p(w)$ is the generic point of $X$.

Recall that $R$ is the local ring of a smooth pointed curve $(B,0)$ over $\kappa$.
We wish to perform a base change corresponding to a ring map $R\to R'$, where $R'$ is the local ring of a smooth pointed curve $(B',0')$ over $\kappa$ and $B'\to B$ is a finite morphism that maps $0'$ to $0$.
When we perform such a base change, the model $\mathcal X$ becomes singular, but it follows from \cite[Proposition 2.2]{hartl} that $\mathcal X\times_RR'$ can be made into a strictly semi-stable $R'$-scheme $\mathcal X'\to \Spec R'$ by repeatedly blowing up all non-Cartier components of the special fibre. 
The exceptional divisors introduced in these blow-ups are ruled surfaces over the algebraically closed field $\kappa$ and so they have trivial second unramified cohomology.
For this reason, assumptions (\ref{item:Br(X-eta)-to-Br(X)}) and (\ref{item:trivial-restr-to-X0})   remain true after such a base change.
We may thus in what follows freely apply base changes as above.

Up to a base change as described above, we may assume that the point $w$ is defined over $K=\Frac(R)$.
We may then consider the normalization
  $\mathcal W$  of the closure of $w$ in $\mathcal X\times_R\mathcal U$. 
The projections of $\mathcal X\times_R\mathcal U$ to the two factors yield natural maps
$$
p|_{\mathcal W}:\mathcal W\longrightarrow \mathcal X\ \ \text{and}\ \ q|_{\mathcal W}:\mathcal W\longrightarrow \mathcal U.
$$
By the above reduction step,  $p|_{\mathcal W}$ is dominant and $q|_{\mathcal W}$ is generically finite onto its image.
It follows that (\ref{item:vanishing-2})--(\ref{item:vanishing-4}) from Section \ref{sec:vanishing} are satisfied.

Since $H^1(\kappa(w),\mu_{\ell^r})\cong \kappa(w)^\ast/(\kappa(w)^\ast)^{\ell^r}$, we may up to a finite base change as above assume that $\xi$ extends to a class on the generic point of $\mathcal W$.
By assumption (\ref{item:Br(X-eta)-to-Br(X)}),   $  \overline{\tilde \delta(\alpha_{j_0})}$ lifts to a class in $H^2_{nr}(\mathcal X,\mu_{\ell^r})$ whose restriction to each component of the special fibre of $\mathcal X\to \Spec R$ vanishes by item (\ref{item:trivial-restr-to-X0}).
The vanishing claimed in step 3  is therefore a consequence of Lemma \ref{lem:vanishing}.
\end{proof}

By assumption, $\beta_{j}$ is the image of $\beta_{\kappa,j}$ via the natural map
$$
H^{2i-1}(F_{i-2}U_\kappa,\mu_{\ell^r}^{\otimes i})\longrightarrow H^{2i-1}(F_{i-2}U,\mu_{\ell^r}^{\otimes i}) ,
$$
where we recall $U=U_\kappa\times_\kappa k$.
The pullback of $\beta_{j,\kappa}$ to the product $\mathcal U:=U_\kappa\times _\kappa \Spec R$ is thus  a class that restricts to $\beta_j$ on the geometric generic fibre and to $\beta_{j,\kappa}$ on the special fibre.
Item (\ref{item:sp-2}) in Proposition \ref{prop:specialization}  thus implies
$$
\bar{sp}(\beta_j)=\beta_{\kappa,j}\in H^{2i-1}(F_{i-2}U_\kappa,\mu_{\ell^r}^{\otimes i})
$$
for all $j$.
Step 3 together with (\ref{eq:q_ast-extends}) in Step 2 thus imply
\begin{align} \label{eq:sp-bar-thm-inj}
\bar{sp}\left(q_\ast\left(  p^\ast \left( \overline{\tilde \delta(\alpha_{j_0})}\right) \cup  \gamma\right) \right) = \bar{sp}(\beta_{j_0})= \beta_{\kappa,j_0}  \in  H^{2i-1}(F_{i-2}U_\kappa,\mu_{\ell^r}^{\otimes i}).
\end{align}

Since $H^2(X,\mu_{\ell^r})\to H^2_{nr}(X,\mu_{\ell^r})$ is surjective by (\ref{eq:lem:Fj}),  $ p^\ast \left( \overline{\tilde \delta(\alpha_{j_0})}\right) $ extends to $X\times Y$.
The same holds for $\gamma$ by assumption.
Hence,
$$
q_\ast\left(  p^\ast \left( \overline{\tilde \delta(\alpha_{j_0})}\right) \cup  \gamma\right)\in  H^{2i-1}(F_{i-2}U,\mu_{\ell^r}^{\otimes i}) 
$$
extends to a class on $Y$.
Note that $F_{i-2}Y=F_{i-2}U$ and $F_{i-2}Y_\kappa=F_{i-2} U_\kappa$.
Item (\ref{item:sp-2}) (resp.\ (\ref{item:sp-3})) in Proposition \ref{prop:specialization} thus implies that the left hand side of (\ref{eq:sp-bar-thm-inj}) extends to a class on $Y_\kappa$ and so
$$
 \beta_{\kappa,j_0}\in  H^{2i-1}(F_{i-2}U_\kappa,\mu_{\ell^r}^{\otimes i})=H^{2i-1}(F_{i-2}Y_\kappa,\mu_{\ell^r}^{\otimes i})
$$
extends to a class in $Y_\kappa$.
This holds for all $j_0=1,\dots ,n$ and so the class in (\ref{eq:thm:inj:nonzero}) vanishes, which contradicts our assumptions.   
This concludes the proof of the theorem. 
\end{proof}


\begin{remark}
The proof of Theorem \ref{thm:product-2} shows that conditions (\ref{item:Br(X-eta)-to-Br(X)}) and (\ref{item:trivial-restr-to-X0}) can be slightly weakened as follows.
The surjectivity in (\ref{item:Br(X-eta)-to-Br(X)}) is only needed to lift the classes $\overline{\tilde \delta(\alpha_{j})}$ from Step 1 to $\mathcal X$.
These classes may by Step 2 be modified by the image of integral classes and so
(\ref{item:Br(X-eta)-to-Br(X)})  may be weakened to only ask that  the composition
$$
H^2_{nr}(\mathcal X,\mu_{\ell ^r} )\longrightarrow H^2_{nr}(X ,\mu_{\ell ^r} )\longrightarrow H^2_{nr}(X ,\mu_{\ell ^r} )/H^2_{nr}(X ,\Z_{\ell}(1))
$$
is surjective, or even weaker, that the images of the classes $\overline{\tilde \delta(\alpha_{j})}$ in $H^2_{nr}(X ,\mu_{\ell ^r})/H^2_{nr}(X ,\Z_{\ell}(1)) $ are contained in the image of the above map.

Item (\ref{item:trivial-restr-to-X0}) is only used to ensure that the lifts of the classes  $\overline{\tilde \delta(\alpha_{j_0})}$ to $\mathcal X$  from Step 1  restrict trivially to the components of the special fibre of $\mathcal X$ and it would be enough to replace (\ref{item:trivial-restr-to-X0}) by this more precise condition.
\end{remark}
 
Recall $E_{\ell^r}^i(Y)\subset A^i(Y)/\ell^r $ from (\ref{def:E-ell-main-2}).
The exterior product map (\ref{def:exterior-product})
induces a map
$$
\times:A^1(X)[\ell^r]\otimes E_{\ell^r}^i(Y)\longrightarrow A^{i+1}(X\times Y)[\ell^r],\ \ [z_1]\otimes [z_2]\mapsto [z_1\times z_2].
$$  

\begin{corollary} \label{cor:product-3}
In the notation of Theorem \ref{thm:product-2}, the kernel of the natural composition
$$
 A^1(X)[\ell]\otimes E_{\ell }^i(Y_\kappa)\longrightarrow A^1(X)[\ell]\otimes E_{\ell }^i(Y)\stackrel{\times}\longrightarrow A^{i+1} (X\times Y)[\ell ] \longrightarrow A^{i+1} (X\times Y)/\ell 
$$ 
is given by $
\left( \ell \cdot A^1(X)[\ell^{2}]\right) \otimes E_{\ell }^i(Y_\kappa) .
$ 
\end{corollary}
\begin{proof}
It is clear that any class in $\left( \ell \cdot A^1(X)[\ell^{2 }]\right) \otimes E_{\ell }^i(Y_\kappa)$ maps to zero in $A^{i+1} (X\times Y)/\ell$.
The converse implication follows by the  commutative diagram in (\ref{diag:products}) (see Lemma \ref{lem:diag-commutes}) from Theorem \ref{thm:product-2}, applied to $M=\tilde \delta(H^1(X,\mu_{\ell}))$. 
\end{proof}

\section{Flower pot degenerations of Enriques surfaces} \label{sec:degeneration-pg=0}

Let $k$ be an algebraically closed field of characteristic zero and let $R=k[[t]]$.
Following previous work of Kulikov and Perrson, Morrison classified all semi-stable degenerations $\mathcal X\to \Spec R$ of Enriques surfaces over $R$, see \cite[Corollary 6.2]{morrison}.
In contrast to Kulikov's original claim in \cite{kulikov}, it is not true that up to birational equivalence we can always assume that $K_{\mathcal X}$ is $2$-torsion.
In fact, there are 3 additional types of degenerations (called (ib), (iib), and (iiib) in \cite[Corollary 6.2]{morrison}) that do not admit an \'etale $2:1$ cover by a Kulikov degeneration of K3 surfaces.
The simplest example of such an exceptional degeneration of Enriques surfaces is given by type (ib), called flower pot degenerations, cf.\ \cite[Proposition 3.3.1(3)]{persson}. 
The central fiber $X_0$ of a flower pot degeneration is given by a rational surface $S_0$ with disjoint smooth rational (-4) curves $C_1,\dots ,C_r$, such that along each $C_i$, there is glued a chain of Hirzebruch surfaces $F_4$ with a $\CP^2$ as an end-component which is glued to the last $F_4$ along a smooth conic.
On the level of K3 covers, a flower pot degeneration corresponds to the degeneration of a K3 surface $T$ with a fixed point free involution $\iota$ to a nodal K3 surface $T_0$ and an involution $\iota_0$ which fixes exactly the nodes of $T_0$.

We will need the following evident properties for a flower pot degeneration $\mathcal X\to \Spec R$:
\begin{itemize}
\item if we perform a base change $t\to t^m$ and resolve the resulting family by blowing up repeatedly all non-Cartier components of the special fibre, then the new semi-stable degeneration of Enriques surfaces is again a flower pot degeneration;
\item each component of the central fibre $X_0$ is rational and the dual complex of $X_0$ is a tree (i.e.\ a finite connected undirected acyclic graph).
\end{itemize}
Explicit examples of flower pot degenerations  of Enriques surfaces have been constructed by Horikawa in \cite[\S 10.2]{horikawa} and by Persson in \cite[Appendix 2]{persson}.
The dual complex of the central fibre is in both constructions given by a straight line, i.e.\ the components of the special fibre form a chain of rational surfaces.

\begin{theorem} \label{thm:Enriques-flower-pot}
Let $\kappa$ be an algebraically closed field of characteristic zero and let $R$ be the local ring at a closed point of a smooth curve over $\kappa$.
Let $\mathcal X\to \Spec R$ be a strictly semi-stable degeneration of Enriques surfaces such that the special fibre $X_0$ is a flower pot as in \cite[type (ib), Corollary 6.2]{morrison}.
Then up to a base change, the restriction map
$$
H^2_{nr}(\mathcal X,\mu_2)\longrightarrow H^2_{nr}(X_{\overline \eta},\mu_2)
$$
to the geometric generic fibre $ X_{\overline \eta}$ is surjective.
\end{theorem}
\begin{proof}
Up to a base change (followed by resolving the resulting family by blowing up all non-Cartier components of the special fibre), we may assume that the monodromy action on $H^2(X_{\overline \eta},\mu_2)$ is trivial.
The Hochschild--Serre spectral sequence $$
E_2^{p,q}=H^p(G,H^q(X_{\bar \eta},\mu_2))\Longrightarrow H^{p+q}(X_{ \eta},\mu_2)
$$ 
degenerates at $E_2$, because the absolute Galois group $G$ of $\Frac R$ has cohomological dimension one.
Hence we get an exact sequence
$$
H^2(X_{ \eta},\mu_2)\longrightarrow H^2(X_{\overline \eta},\mu_2)\stackrel{d_2}\longrightarrow H^2(G,H^1(X_{\overline \eta},\mu_2)).
$$
Since $G$ has cohomological dimension one,  $H^2(G,H^1(X_{\overline \eta},\mu_2))=0$ and so we find that $H^2(X_{ \eta},\mu_2)\to  H^2(X_{\overline \eta},\mu_2)$ is surjective. 
Hence there is a class $\alpha\in H^2(X_{ \eta},\mu_2)$ whose image in $H^2(X_{\overline \eta},\mu_2)$ generates  
$$
H^2_{nr}(X_{\overline \eta},\mu_2) \cong \coker(\cl^1_{X_{\overline \eta}}:\CH^1(X_{\overline \eta})\to H^2(X_{\overline \eta},\mu_2)) \cong \Z/2.
$$
To prove the theorem, it suffices to show that $\alpha$ is unramified on $\mathcal X$ (as the latter implies that $\alpha$ lifts to a class on $H^2(\mathcal X,\mu_2)$, cf.\ Lemma \ref{lem:Gysin} and Remark \ref{rem:mathcal-X}).
To prove the latter, let  $X_{0i}$ for $i\in I$ denote the components of $X_0$.
There is a natural residue map
$$
\del:H^2(X_{ \eta},\mu_2)\longrightarrow \bigoplus_{i\in I} H^1(k(X_{0i}),\Z/2) 
$$
and we let
$$
\gamma_i:=\del \alpha\in H^1(k(X_{0i}),\Z/2) .
$$

Up to a ramified $2:1$ base change (followed by resolving the resulting family as above),  by the commutative diagram in \cite[p.\ 148]{CTO},  we may  assume that $\gamma_i=0$ for all components that have not been introduced by resolving the singularities introduced by the $2:1$ base change.
Since the dual graph of $X_0$ is a tree, we conclude
 that 
 whenever $X_{0i}$ meets $X_{0j}$ with $i\neq j$, then either $\gamma_i$ or $\gamma_j$ is zero.
 This implies that there is subset $J\subset I$ such that
 $$
 X_0':=\bigcup_{j\in J} X_{0j}\subset X_0
 $$
is smooth and such that $\alpha$ extends to an unramified class on $X\setminus X'_0$.
Since $\alpha$ has degree $2$, it actually extends to an honest class 
$$
\alpha'\in H^2(\mathcal X\setminus X'_0,\mu_2) ,
$$
cf.\ (\ref{eq:lem:Fj}).
Since $X'_0$ is smooth, the residue of $\alpha'$ satisfies
$$
\del \alpha'\in H^1(X'_0,\Z/2).
$$
Since each component of $X_0$ is rational, the same holds for the components of $X'_0$ and so $H^1(X'_0,\mu_2)=0$.
Hence, $\del \alpha'=0$ and we find that $\alpha'$ extends to a class in $H^2(\mathcal X ,\mu_2)$.
This concludes the proof of the theorem.
\end{proof}

\begin{proof}[Proof of Theorem \ref{thm:Enriques-deg-intro}]
By \cite[\S 10.2]{horikawa} or \cite[Appendix 2]{persson}, there is a  strictly semistable degeneration $\mathcal X\to \Spec R$ of Enriques surfaces such that the special fibre $X_0$ is a flower pot as in \cite[type (ib), Corollary 6.2]{morrison} and where $R$ is the local ring at a closed point of a smooth curve over an algebraically closed field of characteristic zero.
Then $H^2_{nr}(\mathcal X,\mu_{\ell^r})\cong \Br(\mathcal X)[\ell^r]$ and $H^2_{nr}( X_{\overline \eta},\mu_{\ell^r})\cong \Br( X_{\overline \eta})[\ell^r]$ by \cite[Proposition 4.2.3]{CT} and Remark \ref{rem:mathcal-X}.
Theorem \ref{thm:Enriques-deg-intro} follows therefore from Theorem \ref{thm:Enriques-flower-pot}.
\end{proof}

We end this section with the following result which reveals some subtlety of the geometry of degenerations as in Theorem \ref{thm:Enriques-deg-intro}. 

\begin{proposition}\label{prop:Br-neq-0}
Let $\mathcal X\to \Spec R$ be as in Theorem \ref{thm:Enriques-deg-intro}.
Then $\Br(X_0)[2]\neq 0$ while $\Br(X_{0i})=0$ for each component $X_{0i}$ of $X_0$. 
\end{proposition}
\begin{proof}
Since each $X_{0i}$ is a smooth projective ruled surface over an algebraically closed field,   $\Br(X_{0i})=0$ is clear and it suffices to prove $\Br(X_0)[2]\neq 0$.
Replacing $R$ by its completion, we may assume that $R$ is complete and so there is a non-canonical isomorphism $R\cong \kappa [[t]]$ because the residue field $\kappa$ has characteristic zero.

By the proper base change theorem \cite[Corollary 2.7]{milne}, we  get canonical isomorphisms
\begin{align} \label{eq:proper-basechange}
H^2 (\mathcal X,\mu_{2^r})\cong H^2 ( X_0,\mu_{2^r})\ \ \text{and}\ \ H^2 (\mathcal X,\Z_2(1))\cong H^2 ( X_0,\Z_2(1)) ,
\end{align}
where the latter uses \cite[(0.2)]{jannsen}.
Let $X_\eta=\mathcal X\times_RK$ where $K=\Frac R\cong \kappa((t))$.
The Gysin sequence yields an exact sequence
\begin{align}\label{eq:prop:Br}
\oplus_i \Z_2[X_{0i}]\longrightarrow  H^2 (\mathcal X,\Z_2(1))\longrightarrow H^2 (X_\eta,\Z_2(1))
\end{align}
where $X_{0i}$ denote the components of the special fibre $X_0$ and the first arrow is given by $\sum a_iX_{0i}\mapsto \sum a_ic_1(\mathcal O_{\mathcal X}(X_{0i}))$.
Note that $G:=\Gal(\bar K/K)$ has cohomological dimension $1$ because $R\cong \kappa[[t]]$ with $\kappa$ algebraically closed. 
Moreover,  Kummer theory shows
$$
H^1(G,\mu_{2^r})\cong \kappa((t))^\ast/(\kappa((t))^\ast)^{2^r}.
$$
This is a finite group (isomorphic to $\Z/2^r$) 
and so $R^1\lim H^1(G,\mu_{2^r})=0$.
Hence, $H^2_{cont}(G, \Z_2(1) )\cong \lim H^2(G, \mu_{2^r} )=0$ by \cite[(1.6)]{jannsen} because $G$ has cohomological dimension $1$.
Moreover,  $H^1 (X_{\bar \eta},\Z_2(1))=0$ because $X_{\bar \eta}$ is an Enriques surface.
The Hochschild--Serre spectral sequence for continuous \'etale cohomology (see \cite[(0.3)]{jannsen}) thus yields an isomorphism
$$
 H^2 (X_\eta,\Z_2(1))\cong H^2 (X_{\bar \eta},\Z_2(1))^{G} .
$$
Since $H^1(G,\mathbb G_m)=H^2(G,\mathbb G_m)=0$, the Hochschild--Serre spectral sequence yields similarly an isomorphism $\Pic(X_\eta)\cong \Pic(X_{\bar \eta})^G$.
Since $X_{\bar \eta}$ is an Enriques surface, $H^2 (X_{\bar \eta},\Z_2(1))\cong \Pic(X_{\bar \eta})\otimes \Z_2$ is algebraic.
We thus conclude from the above isomorphism that
$$
 H^2 (X_\eta,\Z_2(1))\cong\Pic(X_\eta)\otimes \Z_2.
$$ 
Taking closures of divisors on $X_\eta$ thus shows via (\ref{eq:prop:Br}) that $H^2 (\mathcal X,\Z_2(1))$ is algebraic.
Since algebraic classes restrict to algebraic classes, the second isomorphism in (\ref{eq:proper-basechange}) shows similarly that
$ 
 H^2 (X_0,\Z_2(1)) 
$ is algebraic.
Hence, (\ref{eq:Brauer}) implies
\begin{align}\label{eq:prop:Br-2}
\Br(\mathcal X)[2^r]\cong H^2 (\mathcal X,\mu_{2^r}) / H^2 (\mathcal X,\Z_2(1)) \ \ \text{and}\ \ \Br(X_0)[2^r]\cong H^2 (X_0,\mu_{2^r})/H^2 (X_0,\Z_2(1)).
\end{align}
We thus conclude from (\ref{eq:proper-basechange}) that there are canonical isomorphisms 
$$
\Br(\mathcal X)[2^r]\cong \Br(X_0)[2^r]
$$
for all $r\geq 0$.
By item (\ref{item:Enriques:surjective}) in Theorem \ref{thm:Enriques-deg-intro}, $\Br(\mathcal X)[2]\neq 0$   and so $ \Br(X_0)[2]\neq 0$, which proves the proposition. 
\end{proof}

\begin{remark}\label{rem:prop:Brauer}
The argument of the above proof shows more generally: if $R$ in Theorem \ref{thm:Enriques-deg-intro} is complete, then the natural restriction map $\Br(\mathcal X)_{\tors}\to \Br(X_0)_{\tors}$ is an isomorphism.  
\end{remark}
 
\begin{remark}
One can use Proposition \ref{prop:Br-neq-0} to check that several degenerations of Enriques surfaces as in \cite[Corollary 6.2]{morrison} (e.g.\ those of type (iia)) do not satisfy the conclusion of Theorem \ref{thm:Enriques-deg-intro}.
\end{remark}

\section{Proof of Theorems \ref{thm:JC} and  \ref{thm:main}}

\begin{proof}[Proof of Theorem \ref{thm:main}]
Let $Y$ be a smooth complex projective variety and let $X$ be an Enriques surface that is very general with respect to $Y$.
We aim to show that the exterior product map
$$
A^i(Y)/2\longrightarrow  A^{i+1}(X\times Y)/2,\ \ \ \ [z]\mapsto [K_X\times z]
$$
is injective.
Using the Künneth decomposition for cohomology with $\Z/2$-coefficients together with the fact that $K_X$ has nontrivial cycle class in $H^2(X,\mu_2)$,  we see that $[K_X\times z]=0$ implies that $[z]$ has trivial cycle class in $H^{2i}(Y,\mu_2^{\otimes i})$.
The problem thus reduces to the statement that
$$
E_2^i(Y) \longrightarrow   A^{i+1}(X\times Y)/2,\ \ \ \ [z]\mapsto [K_X\times z]
$$
is injective, where 
$ 
E_{2}^i(Y)= \ker\left( \cl_Y^i:A^i(Y)/2\to H^{2i}(Y,\mu_2^{\otimes i}) \right) 
$ 
from (\ref{def:E-ell-main-2}).

By a straightforward specialization argument
and because the moduli space of complex Enriques surfaces is irreducible,   it suffices to prove the result for some smooth projective Enriques surfaces $X$ over $\C$.

Since Chow groups modulo algebraic equivalence are countable, there is a countable algebraically closed field $\kappa\subset \C$ such that $Y=Y_{\kappa}\times \C$ for some smooth projective variety $Y_{\kappa}$ over $\kappa$ and such that the natural map
$$
E_2^i(Y_{\kappa})\longrightarrow E_2^i(Y)
$$
is surjective, hence  an isomorphism because $\kappa$ and $\C$ are algebraically closed.

Let $R:=\kappa[[t]]$.
By Theorem \ref{thm:Enriques-deg-intro}, we may up to enlarging $\kappa$ assume that there is a regular flat proper scheme $\mathcal X\to \Spec R$ whose generic fibre is a smooth Enriques surface and such that:
\begin{itemize}
\item there is a class $\alpha \in H_{nr}^2(\mathcal X,\mu_2^{\otimes 2})$ whose pullback to the geometric generic fibre is the unique nonzero Brauer class of the Enriques surface $X_{\overline \eta}$;
\item  the restriction of $\alpha$ to each component of the special fibre is trivial.
\end{itemize} 

Let $k$ be an algebraic closure of the fraction field of $R$.
Since $\kappa$ is countable, we may assume that $k\subset \C$.

It follows from  Corollary \ref{cor:product-3} that 
$$
E_2^i(Y_{\kappa}) \longrightarrow  A^{i+1}(X_{\overline \eta}\times Y_k)/2 ,\ \ \ \ [z]\mapsto [K_X\times z]  
$$ 
is injective.
Since $k\subset \C$, we may consider the base change $X:=X_{\overline \eta}\times_k \C$.
Since $k\subset \C$ is an extension of algebraically closed fields, a well-known and straightforward specialization argument shows that 
$$
A^{i+1}(X_{\overline \eta}\times Y_k)/2\longrightarrow  A^{i+1}(X\times Y)/2 
$$
is injective.
Altogether, we thus see that
$$
E_2^i(Y)\cong E_2^i(Y_{\kappa}) \longrightarrow  A^{i+1}(X\times Y)/2  ,\ \ \ \ [z]\mapsto [K_X\times z]  
$$ 
is injective.
Here the Enriques surface $X$ is somewhat special, but as noted above, this also shows that the map in question is injective for a very general Enriques surface in place of $X$.
This concludes the proof.
\end{proof}

\begin{proof}[Proof of Corollary \ref{cor:main}]
Note that $K_X\in A^1(X)$ is $2$-torsion and the Jacobian variety of $X$ is trivial.
It follows that the Deligne cycle class (see e.g.\ \cite{Voi-unramified}) of $K_X$ is $2$-torsion.
Hence, 
 for $z\in A^i(Y)$ with $[z]\in
E_{2}^i(Y)= \ker\left( \cl_Y^i:A^i(Y)/2\to H^{2i}(Y,\mu_2^{\otimes i}) \right) 
$, the class $[K_X\times z]\in A^{i+1}(X\times Y)$ has trivial Deligne cycle class, because the Deligne cycle class of $K_X$ is $2$-torsion, while that of $z$ is divisible by $2$.
It follows that the exterior product map  
$$
E_2^i(Y) \longrightarrow  A^{i+1}(X\times Y) ,\ \ \ \ [z]\mapsto [K_X\times z]
$$
lands in the subspace of classes of the Griffiths group with trivial Abel--Jacobi invariant.
Since the image of the above map is clearly $2$-torsion, we find that its
  image is contained in
$$
\mathcal T^{i+1}(X\times Y)[2]\subset \Griff^{i+1}(X \times Y)\subset A^{i+1}(X\times Y) .
$$
Corollary \ref{cor:main} follows therefore from Theorem \ref{thm:main}.
\end{proof}

\begin{proof}[Proof of Theorem \ref{thm:JC}]
Let $C\subset \CP^2_\C$ be a very general quartic curve. 
Since the group of algebraically trivial cycles modulo rational equivalence is divisible, we have $\CH^2(Y)/2\cong A^2(Y)/2$.
Hence, \cite{totaro-chow} shows that $\Griff^2(JC)/2$ is infinite. 
Theorem \ref{thm:main} implies that for a very general Enriques surface $X$, the map
$$
\Griff^2(JC)/2\longrightarrow \Griff^3(X\times JC)[2],\ \ [z]\mapsto [K_X\times z]
$$
is injective.
Hence, $\Griff^3(X\times JC)$ has infinite $2$-torsion, as we want.
\end{proof}

\begin{remark}
Replacing \cite{totaro-chow} in the above proof by \cite{diaz-JAG},  we see that Theorem \ref{thm:JC} remains true if $JC$ is replaced by the product of three very general elliptic curves.
\end{remark}

\begin{corollary}\label{cor:torsion}
For any $n\geq 5$ and any $3\leq i\leq n-2$, there is a smooth complex projective $n$-fold $X$  with infinite torsion (in fact $2$-torsion) in $\Griff^i(X)$.
\end{corollary}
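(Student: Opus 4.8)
The plan is to bootstrap from Theorem~\ref{thm:JC} by taking products with projective spaces and applying the projective bundle formula at the level of Griffiths groups.

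First I would fix the base case. Let $S$ be a very general Enriques surface and $C\subset\CP^2_\C$ a very general quartic curve; since a smooth plane quartic has genus $3$, the Jacobian $JC$ has dimension $3$, so $Y:=S\times JC$ is a smooth complex projective $5$-fold. Theorem~\ref{thm:JC} then says that $\Griff^3(Y)$ contains an infinite $2$-torsion subgroup.

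Next, given $n\geq 5$ and $3\leq i\leq n-2$, I would set $m:=n-5\geq 0$ and $X:=Y\times\CP^{m}_\C$, a smooth complex projective $n$-fold, and invoke the projective bundle decomposition $\CH^k(X)=\bigoplus_{j=0}^{m}\CH^{k-j}(Y)\cdot h^j$, where $h\in\CH^1(X)$ is the pullback of the hyperplane class of $\CP^m$ and the $j$-th summand is the image of $\CH^{k-j}(Y)$ under $\alpha\mapsto p_Y^\ast\alpha\cdot p_{\CP^m}^\ast h^j$. The key point is that this decomposition is compatible with both homological and algebraic equivalence: each summand is obtained by flat pullback along the two projections followed by intersection with $h^j$ (equivalently, with a linear subspace of $\CP^m$), and all of these operations preserve homological and algebraic equivalence. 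Hence the decomposition descends to $\Griff^k(X)=\bigoplus_{j=0}^{m}\Griff^{k-j}(Y)$. Taking $k=i$ and isolating the summand $j=i-3$ — which is legitimate precisely because $0\leq i-3\leq m=n-5$, i.e.\ because $3\leq i\leq n-2$ — exhibits $\Griff^3(Y)$ as a direct summand of $\Griff^i(X)$. Therefore the infinite $2$-torsion subgroup of $\Griff^3(Y)$ furnished by Theorem~\ref{thm:JC} embeds into $\Griff^i(X)$, which proves the corollary.

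There is essentially no hard part here; the only thing requiring (minor) care is to spell out explicitly that the projective bundle formula is compatible with algebraic and homological equivalence — a routine but necessary remark, since the assertion is about Griffiths groups rather than Chow groups — and to record that the numerical hypothesis $3\leq i\leq n-2$ is exactly the condition under which the codimension-$3$ part of $Y$ contributes to the codimension-$i$ part of $X=Y\times\CP^{n-5}_\C$.
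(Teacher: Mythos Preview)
Your argument is correct and follows exactly the same approach as the paper: take the product of the fivefold from Theorem~\ref{thm:JC} with $\CP^{n-5}$ and use the projective bundle formula to identify $\Griff^3$ of the fivefold as a direct summand of $\Griff^i$ of the product for $3\leq i\leq n-2$. The paper's proof is a one-line invocation of the projective bundle formula; your version simply spells out the compatibility with algebraic and homological equivalence and the numerical constraint more explicitly.
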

\begin{proof}
For $n\geq 5$, the projective bundle formula shows that the smooth complex projective variety $X\times JC\times \CP^{n-5}$, where $C$ and $X$ are as in the proof of Theorem \ref{thm:JC}, has infinite $2$-torsion in $\Griff^{i}$ for all $3\leq i\leq n-2$.
This proves the corollary.
\end{proof}

\begin{remark}
It remains open whether $\Griff^i(X)$ may have infinite torsion for $i=2$ or $n-1$; but recall that the $n$-torsion subgroup of $\Griff^2(X)$ is finite for any  $n\geq 1$, see  \cite[\S 18]{MS}.
\end{remark}

\section{Further applications}


In \cite{Voi-unramified,Ma}, Voisin and Ma showed that for any smooth complex projective variety $X$ whose Chow group of zero-cycles is supported on a threefold, there is a short exact sequence
$$
0\longrightarrow \left(  \frac{H^5(X,\Z)}{N^2H^5(X,\Z)} \right)_{\tors} \longrightarrow H^4_{nr}(X,\Q/\Z) \longrightarrow \mathcal T^3(X)\longrightarrow 0,
$$
where $N^\ast H^5(X,\Z)$ denotes the coniveau filtration and 
 where $\mathcal T^3(X)\subset \Griff^3(X)$ denotes the subgroup of torsion classes with trivial transcendental Abel--Jacobi invariants.
All three terms in the above sequence are birational invariants of $X$.
In \cite[\S 4]{Voi-unramified}, Voisin considered the question whether it can happen that the first term vanishes, while the unramified cohomology group in the middle is nonzero.
In \emph{loc.\ cit.,} Voisin showed that the generalized Hodge conjecture implies the existence of such examples; by the generalized Bloch conjecture, her examples should actually satisfy $\CH_0(X)=\Z$. 
By a result of Ottem and Suzuki \cite{OS},  
Theorem \ref{thm:main}  
answers Voisin's question unconditionally.


\begin{corollary}\label{cor:CH-0=0-2}
For any $n\geq 5$, there is a smooth complex projective $n$-fold $X$ with $\CH_0(X)\cong \Z$ such that 
$$
N^2H^5(X,\Z)= H^5(X,\Z)\ \ \text{and}\ \ H^4_{nr}(X,\Q/\Z)\cong  \mathcal T^3(X)= \Griff^3(X)_{\tors} \neq 0.
$$
\end{corollary}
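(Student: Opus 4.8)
The plan is to apply Theorem \ref{thm:main} to a carefully chosen threefold $Y$ and then to verify the stated birational-cohomological identities by combining the structural exact sequence of Voisin--Ma with the concrete description of the classes produced by Theorem \ref{thm:main}. First I would invoke the result of Ottem and Suzuki \cite{OS} to obtain a smooth complex projective threefold $Y$ with $\CH_0(Y)\cong\Z$ that carries a non-algebraic Hodge class $\alpha\in H^{2,2}(Y,\Z(2))$ which is $2$-torsion of order $4$ (or non-torsion), i.e.\ such that $2\alpha=\cl^2(z)$ for some $z\in\CH^2(Y)$. This makes $E_2^{2}(Y)\neq 0$: the extension \eqref{def:E} with $i=2$, $\ell=2$, $r=1$ has nonzero quotient $Z^4(Y)[2]/H^4(Y,\Z(2))[2]$ because $\alpha$ is non-algebraic, so $E_2^2(Y)$ is nonzero. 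Then let $X$ be an Enriques surface very general with respect to $Y$ and set $X':=X\times Y$, a smooth projective fivefold; for $n>5$ one replaces $X'$ by $X'\times\CP^{n-5}$, which changes none of the relevant groups since Chow groups, coniveau filtration on cohomology, and $\mathcal T^3$ are all compatible with products with projective space (projective bundle formula). So I will treat $n=5$.

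Next I would establish the three assertions in turn. For $\CH_0(X')\cong\Z$: since $X$ is an Enriques surface, $\CH_0(X)\cong\Z$ (Enriques surfaces are of general type with $p_g=q=0$ and have trivial $\CH_0$ by Bloch--Kas--Lieberman, or simply because they are rationally connected over... — more precisely because $H^0(X,\Omega_X^2)=0$ and $\CH_0$ of an Enriques surface is known to be $\Z$); combined with $\CH_0(Y)\cong\Z$ and a standard decomposition-of-the-diagonal / product argument, $\CH_0(X\times Y)\cong\Z$. For $N^2H^5(X',\Z)=H^5(X',\Z)$: by the Künneth decomposition $H^5(X',\Z)=\bigoplus_{p+q=5}H^p(X,\Z)\otimes H^q(Y,\Z)\oplus(\text{Tor terms})$; one checks each summand has coniveau $\geq 2$, using that $X$ has $H^1(X,\Z)=0$ and that the nontrivial cohomology of $X$ in odd degree is concentrated appropriately, together with the fact that $\CH_0$ of both factors is trivial, which by Bloch--Srinivas forces the relevant coniveau bounds on $H^5$. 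For the last identity, $H^4_{nr}(X',\Q/\Z)\cong\mathcal T^3(X')=\Griff^3(X')_{\tors}\neq 0$: the nonvanishing is immediate from Theorem \ref{thm:JC}/Theorem \ref{thm:main}, which give $E_2^2(Y)\hookrightarrow\Griff^3(X')[2]$ landing in the kernel of the transcendental Abel--Jacobi map, so $\mathcal T^3(X')\neq 0$; moreover Theorem \ref{thm:JC} (more precisely the indivisibility clause of Theorem \ref{thm:main}) shows these classes are nonzero mod $2$. Then I would feed $N^2H^5(X',\Z)=H^5(X',\Z)$ into the Voisin--Ma exact sequence: the first term $\big(H^5(X',\Z)/N^2H^5(X',\Z)\big)_{\tors}$ vanishes, so $H^4_{nr}(X',\Q/\Z)\xrightarrow{\ \sim\ }\mathcal T^3(X')$. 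Finally, $\mathcal T^3(X')=\Griff^3(X')_{\tors}$: since $X'$ has dimension $5$, $\Griff^3$ is a middle-ish codimension group, but here one uses that every torsion class in $\Griff^3(X')$ has trivial transcendental Abel--Jacobi invariant — this follows because the transcendental Abel--Jacobi map on torsion factors through a group built from $H^5(X',\Z)$ modulo coniveau, which is torsion-free once $N^2H^5=H^5$ — hence $\mathcal T^3(X')=\Griff^3(X')_{\tors}$.

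The main obstacle I expect is the verification that $N^2H^5(X\times Y,\Z)=H^5(X\times Y,\Z)$, i.e.\ that \emph{every} class in $H^5$ — including torsion classes and classes whose Künneth components involve $H^2(X,\Z)_{\tors}=\Z/2$ — has coniveau at least $2$. The subtle summand is $H^2(X,\Z)_{\tors}\otimes(\text{something in }H^3(Y,\Z))$-type contributions and the Tor term $\mathrm{Tor}(H^2(X,\Z)_{\tors},H^4(Y,\Z))$, where the generalized Hodge/coniveau conjecture is not automatic and one must argue directly. The cleanest route is the Bloch--Srinivas method: $\CH_0(X\times Y)\cong\Z$ yields a decomposition of the diagonal of $X\times Y$ (with $\Z$-coefficients after inverting nothing, since $\CH_0$ is exactly $\Z$), namely $\Delta_{X\times Y}=(X\times Y)\times\mathrm{pt}+\Gamma$ in $\CH^5(X\times Y\times X\times Y)$ with $\Gamma$ supported on $D\times X\times Y$ for a divisor... — more carefully, supported over a proper closed subset, which upon taking the action on $H^5$ shows $H^5(X\times Y,\Z)$ is supported in coniveau $\geq 1$, and then a second application (or the structure of the support, which for a product of an Enriques surface with a threefold of trivial $\CH_0$ can be taken of dimension $\leq 3$) upgrades this to coniveau $\geq 2$. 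The only delicate point is tracking torsion through the decomposition of the diagonal, which is legitimate precisely because $\CH_0$ is \emph{exactly} $\Z$ (no torsion to invert), so the argument runs integrally. All remaining steps are formal consequences of results quoted in the introduction.
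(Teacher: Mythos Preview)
Your overall architecture---take $Y$ from Ottem--Suzuki, apply Theorem \ref{thm:main} to $X\times Y$, and feed $N^2H^5=H^5$ into the Voisin--Ma sequence---is exactly the paper's strategy. The substantive difference, and the place where your proposal has a genuine gap, is the verification of $N^2H^5(X\times Y,\Z)=H^5(X\times Y,\Z)$.

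Your ``cleanest route'' via an integral decomposition of the diagonal does not work. The claim that $\CH_0(X\times Y)\cong\Z$ over $\C$ yields an \emph{integral} decomposition of the diagonal is false: what you need is universal triviality of $\CH_0$, i.e.\ $\CH_0((X\times Y)_L)\cong\Z$ for every field $L\supset\C$, and this fails already for the Enriques surface $X$. Indeed $\Br(X)\cong H^2_{nr}(X,\Q/\Z)\cong\Z/2$, and nontrivial unramified cohomology obstructs an integral Chow decomposition of the diagonal. So $X$, and hence $X\times Y$, admits only a \emph{rational} decomposition of the diagonal, which controls $H^5\otimes\Q$ but tells you nothing about the torsion classes in $H^5(X\times Y,\Z)$---precisely the classes you flagged as the obstacle. (Incidentally, Enriques surfaces have Kodaira dimension $0$, not general type, and are certainly not rationally connected.)

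The paper avoids this by doing the K\"unneth computation directly, and the key input you omitted is that the Ottem--Suzuki threefold $Y$ has \emph{torsion-free} integral cohomology. This kills all Tor terms, gives $H^5(Y,\Z)=0$ (since $b_1=b_5=0$), and reduces the problem to the summands $H^2(X\times\CP^{n-5},\Z)\otimes H^3(Y,\Z)$ and $H^3(X,\Z)\otimes H^2(\CP^{n-5},\Z)$. For the first, $H^2(X\times\CP^{n-5},\Z)$ is algebraic (coniveau $\geq 1$), and classes in $H^3(Y,\Z)$ have coniveau $\geq 1$ because the rational decomposition of the diagonal of $Y$ forces $H^3_{nr}(Y,\Z)$ to be torsion, hence zero by Merkurjev--Suslin. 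For the second, $H^3(X,\Z)\cong\Z/2$ has coniveau $\geq 1$ by dimension reasons and $H^2(\CP^{n-5},\Z)$ is algebraic. Each summand thus lands in $N^2H^5$. Your sketch of the K\"unneth route was on the right track; what was missing was invoking torsion-freeness of $H^\ast(Y,\Z)$ and the specific coniveau-$1$ inputs for each factor.
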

\begin{proof} 
By \cite{OS}, there is a smooth complex projective threefold $Y$, given as a pencil of Enriques surfaces, such that $\CH_0(Y)\cong \Z$, $H^\ast(Y,\Z)$ is torsion-free and $Y$ admits a non-algebraic Hodge class $\alpha\in H^{2,2}(Y,\Z)$ such that $2\alpha$ is algebraic. 
It thus follows from Theorem \ref{thm:main} that  for any very general Enriques surface $X$, the product 
$$
Z:= X\times Y\times \CP^{n-5} 
$$ 
contains a nonzero $2$-torsion class in $\mathcal T^3(Z)$.
Moreover, $\CH_0(Z)\cong \Z$, because  $X$ has trivial Chow group of zero-cycles.
Since $Y$ has torsion-free cohomology, the K\"unneth formula applies and shows that $H^5(Z,\Z)$ decomposes as
$$
  ( H^3(X,\Z)\otimes H^2(Y\times \CP^{n-5},\Z)) \oplus ( H^2(X,\Z)\otimes H^3(Y\times \CP^{n-5},\Z)) \oplus H^5(Y\times \CP^{n-5},\Z) ,
$$
where we used $b_1(Y)=0$, since $\CH_0(Z)\cong \Z$, cf.\ \cite{BS}.
Since $\CH_0$ of $X$ and $Y$ are supported on a point, the  positive degree integral cohomology of $X$ and $Y$ is contained in $N^1$, cf.\ \cite[Proposition 3.3]{CTV}.
Using this, the above decomposition (together with the Künneth decomposition for $Y\times \CP^{n-5}$) easily shows that  
$ 
N^2H^5(Z,\Z)=  H^5(Z,\Z).
$  
Since $\CH_0(Z)\cong \Z$, this implies by \cite[Corollary 0.3]{Voi-unramified} that
$
H^4_{nr}(Z,\Q/\Z)\cong \mathcal T^3(Z). 
$ 
Moreover, $
N^2H^5(Z,\Z)=H^5(Z,\Z)
$ implies that the intermediate Jacobian $J^5(Z)$ of $Z$ is generated by the images of $J^1(\widetilde W)$, where $W\subset Z$ runs through all subvarieties of codimension $2$ and $\widetilde W$ denotes a resolution of $W$.
In particular, the transcendental intermediate Jacobian $J^5_{tr}(Z)$ vanishes, and so
$ 
\mathcal T^3(Z)  = \Griff^3(Z)_{\tors}.
$ 
This concludes the proof of the corollary.
\end{proof}

The condition $\CH_0(X)\cong \Z$ means that $X$ admits a rational decomposition of the diagonal.
By \cite[Theorem 1(ii)]{BS},
$$
\CH_0(X)\cong \Z\ \  \Longrightarrow \ \ \Griff^2(X)=0.
$$
This implication fails for $\Griff^3$, because we may blow-up varieties with $\CH_0=\Z$ along a smooth subvariety with nontrivial $\Griff^2$.
We are however not aware of any other construction that would yield varieties with small Chow groups of zero-cycles but nontrivial Griffiths groups. 
For instance, to the best of our knowledge, it was previously not known
 whether varieties with a rational decomposition of the diagonal  always admit a birational model with trivial Griffiths groups.
The following consequence of the above corollary solves that problem.

\begin{corollary}\label{cor:CH-0=0}
For any $n\geq 5$, there is a smooth complex projective $n$-fold $X$ 
such that any smooth complex projective variety $X'$ that is birational to $X$ satisfies
$$
\CH_0(X')\cong \Z\ \ \text{and}\ \ 
\Griff^3(X')\neq 0.
$$
\end{corollary}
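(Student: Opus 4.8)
The plan is to take for $X$ the smooth projective $n$-fold produced by Corollary~\ref{cor:CH-0=0-2} and to check that every smooth projective variety $X'$ birational to it inherits the two asserted properties. Both properties are governed by birational invariants: the triviality of $\CH_0$ is a birational invariant of smooth projective varieties, while by the theorem of Voisin and Ma recalled above the group $\mathcal T^3$ is one of the three terms of an exact sequence all of whose terms are birational invariants. So the corollary should be a formal consequence of Corollary~\ref{cor:CH-0=0-2} together with these invariance statements, with no new geometric input.

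First I would verify that $\CH_0(X')\cong\Z$ for every smooth projective $X'$ birational to $X$. By Corollary~\ref{cor:CH-0=0-2} we have $\CH_0(X)\cong\Z$, and $\CH_0$ is a birational invariant of smooth projective varieties; concretely, by the weak factorization theorem it suffices to note that the blow-up formula for Chow groups gives $\CH_0(\mathrm{Bl}_W V)\cong\CH_0(V)$, compatibly with the degree map, for any blow-up $\mathrm{Bl}_W V\to V$ along a smooth centre. In particular $\CH_0(X')$ is supported on a point, hence a fortiori on a threefold (recall $n\geq 5$), so the Voisin--Ma exact sequence is available for $X'$ as well.

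Next I would deduce $\Griff^3(X')\neq 0$. For $X$ itself, Corollary~\ref{cor:CH-0=0-2} gives $H^4_{nr}(X,\Q/\Z)\cong\mathcal T^3(X)\neq 0$ and, since $N^2H^5(X,\Z)=H^5(X,\Z)$, the leftmost term of the Voisin--Ma sequence vanishes. Because the three terms of that sequence are birational invariants and, by the previous step, the sequence applies to every smooth projective model, the leftmost term still vanishes for $X'$; combining this with the (unconditional) birational invariance of unramified cohomology we get $\mathcal T^3(X')\cong H^4_{nr}(X',\Q/\Z)\cong H^4_{nr}(X,\Q/\Z)\neq 0$. Since $\mathcal T^3(X')$ is by definition a subgroup of $\Griff^3(X')$, this yields $\Griff^3(X')\neq 0$, as wanted.

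I do not expect a serious obstacle here: the substance is entirely contained in Corollary~\ref{cor:CH-0=0-2} and in the cited birational-invariance results. The only point that requires a little care is that the Voisin--Ma sequence carries a hypothesis on $\CH_0$, so one must first establish $\CH_0(X')\cong\Z$ for \emph{every} smooth projective model $X'$ — which is exactly the purpose of the first step — before one is entitled to transport the vanishing of the leftmost term and the nonvanishing of $\mathcal T^3$ across the whole birational class.
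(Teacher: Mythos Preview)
Your proposal is correct and follows essentially the same approach as the paper: take the $n$-fold from Corollary~\ref{cor:CH-0=0-2}, use birational invariance of $\CH_0$, and then birational invariance of $\mathcal T^3$ to conclude $\Griff^3(X')\neq 0$. The only difference is that the paper invokes the birational invariance of $\mathcal T^3$ directly via \cite[Lemma~2.2]{Voi-unramified}, whereas you take a small detour through the Voisin--Ma sequence and the invariance of its other terms (which, given that you already assert all three terms are birational invariants, is slightly redundant).
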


\begin{proof}  
The result follows directly from Corollary \ref{cor:CH-0=0-2} and the fact that $\mathcal T^3(Z)$ is a birational invariant of smooth complex projective varieties, see \cite[Lemma 2.2]{Voi-unramified}.  
\end{proof}

\begin{corollary}\label{cor:Kummer}
Let $JC$ be the Jacobian of a smooth quartic $C\subset \CP^2_\C$ defined over $\Q$, with good reduction at $2$ and with associated Kummer variety $Y=\widetilde{JC}/\pm$.
Then for any very general Enriques surface $X$ over $\C$: 
$$
\Griff^3(X\times Y )[2]\neq 0 \ \ \text{and} \ \ \Griff^3(X\times Y)/2\neq 0 .
$$   
\end{corollary}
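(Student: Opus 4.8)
The plan is to apply Theorem~\ref{thm:main} with $i=2$ to the smooth complex projective variety $Y=\widetilde{JC}/\pm$, so that it will suffice to produce a nonzero class in $E_2^2(Y)$ and to track it through the inclusion $E_2^2(Y)\hookrightarrow\Griff^3(X\times Y)[2]$. First I would recall that $\widetilde{JC}$ is the blow-up of the abelian threefold $JC$ at its $64$ two-torsion points, that the involution $-1$ lifts to $\widetilde{JC}$ acting trivially on each exceptional $\CP^2$ and by $-1$ in the normal direction, and hence that the quotient $Y$ is a smooth projective variety. Since $C$, and therefore $Y$, is defined over $\Q$, the variety $Y$ is a fixed one, so Theorem~\ref{thm:main} applies to $Y$ together with any very general Enriques surface $X$ over $\C$.

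The input I would then use is Diaz's theorem \cite{diaz}: under the good reduction hypothesis at $2$, the Kummer variety $Y$ carries a non-torsion class $\alpha\in H^4(Y,\Z)$ that is not algebraic but with $2\alpha$ algebraic, say $2\alpha=\cl^2(z)$ for some $z\in\CH^2(Y)$. Since $\alpha$ is non-torsion while $2\alpha$ is a Hodge class, $\alpha$ is itself a Hodge class; thus $\alpha$ is a non-algebraic Hodge class, non-torsion, with $2\alpha=\cl^2(z)$---exactly the situation discussed after Theorem~\ref{thm:main}. One checks that the class of $\alpha$ in $Z^4(Y)[2]/H^4(Y,\Z(2))[2]$ is nonzero, so by the exact sequence \eqref{def:E} it lifts to a nonzero class $e\in E_2^2(Y)$; by Theorem~\ref{thm:main} (equivalently Theorem~\ref{thm:product-3}) the inclusion $E_2^2(Y)\hookrightarrow\Griff^3(X\times Y)[2]$ sends $e$ to $[D\times z]$, where $D\in\Pic X$ is the unique nontrivial $2$-torsion class. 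Consequently $[D\times z]$ is a nonzero element of $\Griff^3(X\times Y)[2]$ which, by the last assertion of Theorem~\ref{thm:main}, is not divisible by $2$ in $\CH^3(X\times Y)/\sim_{alg}$.

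Both conclusions of the corollary will then follow. The first is immediate from $[D\times z]\neq 0$ in $\Griff^3(X\times Y)[2]$. For the second, $2\cdot[D\times z]=0$ in $\Griff^3(X\times Y)$ while $[D\times z]$ is not divisible by $2$ in $\CH^3(X\times Y)/\sim_{alg}$, hence not divisible by $2$ in the subgroup $\Griff^3(X\times Y)$ either, so $[D\times z]$ has nonzero image in $\Griff^3(X\times Y)/2$.

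I expect no real obstacle here: the depth of the statement is carried entirely by Theorem~\ref{thm:main} and by Diaz's cited construction. The one point that will need a word of justification is that the class of $\alpha$ does not vanish in $Z^4(Y)[2]/H^4(Y,\Z(2))[2]$, i.e.\ that $\alpha$ stays non-algebraic modulo torsion in $H^4(Y,\Z)$; this is how Diaz's non-algebraicity is established in the first place, and it is automatic if $H^4(Y,\Z)$ happens to be $2$-torsion-free. The remaining steps---the smoothness of $Y$, the interpretation of ``very general'', and the elementary divisibility argument---are routine.
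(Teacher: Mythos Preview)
Your proposal is correct and follows essentially the same route as the paper's proof: invoke Diaz's non-algebraic Hodge class $\alpha$ with $2\alpha$ algebraic, feed this into $E_2^2(Y)$, and apply Theorem~\ref{thm:main} to obtain a nonzero, $2$-indivisible $2$-torsion class $[D\times z]\in\Griff^3(X\times Y)$. The one point you flagged---that $\alpha$ remains nonzero in $Z^4(Y)[2]/H^4(Y,\Z(2))[2]$---is handled in the paper exactly as you anticipated, by noting that the integral cohomology of the Kummer variety $Y$ is torsion-free.
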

\begin{proof}[Proof of Corollary \ref{cor:Kummer}]
Let $C$ be a smooth plane curve of degree $4$ defined over $\Q$ and with good reduction at $2$.
Then $JC$ has good reduction at $2$ and \cite[Corollary 2.13]{diaz} implies that the Kummer variety $Y=\widetilde{JC/\pm}$ associated to $JC$ is a smooth complex projective variety with $H^3_{nr}(Y,\mu_2)\neq 0$.
As noted in \emph{loc.\ cit.\ }the Chow group of $Y$ is supported on a surface (see \cite[\S 4, Example (1)]{BS}) and the integral cohomology of $Y$ is torsion-free, so that \cite{CTV} implies that there is a non-algebraic non-torsion Hodge class $\alpha\in H^{2,2}(Y,\Z)$ such that $2\alpha$ is algebraic, cf.\ \cite[Corollary 3.3]{diaz}.
For any very general complex projective Enriques surface $X$, Theorem \ref{thm:main} thus implies that $\Griff^3(X\times Y)[2]\neq 0$  and $\Griff^3(X\times Y)/2\neq 0$, as we want. 
\end{proof}

\section*{Acknowledgements} 

I am grateful  to Theodosis Alexandrou,  Olivier Benoist,  Klaus Hulek, Andreas Rosenschon,  Matthias Sch\"utt, and Burt Totaro  for comments and discussions.
Thanks to the referee for his or her comments.
This project has received funding from the European Research Council (ERC) under the European Union's Horizon 2020 research and innovation programme under grant agreement No 948066.


\end{document}